\DeclareMathOperator{\st}{s.t.}
\DeclareMathOperator{\diag}{diag}
\DeclareMathOperator{\Diag}{Diag}
\DeclareMathOperator{\AR}{RLP}
\DeclareMathOperator{\AP}{Aff}
\DeclareMathOperator{\LB}{LB}
\newtheorem{theorem}{Theorem}
\newtheorem{lemma}{Lemma}
\newtheorem{proposition}{Proposition}
\newtheorem{assumption}{Assumption}
\newcommand{\nin}{\noindent}
\newcommand{\U}{{\mathcal{U}}}
\newcommand{\W}{{\mathcal{W}}}
\newcommand{\K}{\widehat{\mathcal{U}}}
\newcommand{\CP}{{\mathrm{CPP}}}
\newcommand{\COP}{{\mathrm{COP}}}
\newcommand{\Null}{\mathrm{Null}}
\newcommand{\X}{{\mathcal{X}}}
\newcommand{\M}{\mathrm{IA}}
\newcommand{\IB}{\mathrm{IB}}
\newcommand{\p}{\pi}
\renewcommand{\P}{\Pi}
\newcommand{\SYM}{{\mathcal{S}}}
\def\RR{{\mathbb{R}}}
\newcommand{\Q}{{\K \times \RR^m_+}}
\title{A Copositive Approach for \\ Two-Stage Adjustable Robust
Optimization \\ with Uncertain Right-Hand Sides}
\author{%
Guanglin Xu\thanks{Department of Management Sciences, University of Iowa,
Iowa City, IA, 52242-1994, USA. Email: {\tt guanglin-xu@uiowa.edu}.}%
\ \ \ \ \ \ \
Samuel Burer\thanks{Department of Management Sciences, University of Iowa,
Iowa City, IA, 52242-1994, USA. Email: {\tt samuel-burer@uiowa.edu}.}%
}
\date{September 23, 2016 \\ Revised: May 17, 2017}
\begin{document}

\maketitle

\begin{abstract}

\noindent We study two-stage adjustable robust linear programming
in which the right-hand sides are uncertain and belong to a convex,
compact uncertainty set. This problem is NP-hard, and the affine policy
is a popular, tractable approximation. We prove that under standard
and simple conditions, the two-stage problem can be reformulated as
a copositive optimization problem, which in turn leads to a class of
tractable, semidefinite-based approximations that are at least as
strong as the affine policy. We investigate several examples from the
literature demonstrating that our tractable approximations significantly
improve the affine policy. In particular, our approach solves exactly in
polynomial time a class of instances of increasing size for which the
affine policy admits an arbitrarily large gap.

\mbox{}

\noindent Keywords: Two-stage adjustable robust optimization,
robust optimization, bilinear programming, non-convex quadratic
programming, semidefinite programming, copositive programming.

\end{abstract}

\begin{onehalfspace}

\section{Introduction} 

Ben-Tal et.~al.~\cite{Ben-Tal.Goryashko.Guslitzer.Nemirovski.2004}
introduced two-stage {\em adjustable robust optimization
(ARO)\/}, which considers both first-stage (``here-and-now'')
and second-stage (``wait-and-see'') variables. ARO can be
significantly less conservative than regular robust optimization,
and real-world applications of ARO abound: unit commitment in
renewable energy \cite{Bertsimas.Litvinov.Sun.Zhao.Zheng.2013,
Wang.Watson.Guan.2013, Zhao.Zeng.2012}, facility location problems
\cite{Ardestani-Jaafari.Delage.2014, Atamturk.Zhang.2007,
Gabrel.Lacroix.Murat.Remli.2014}, emergency supply chain planning
\cite{Ben-Tal.Chung.Mandala.Yao.2011}, and inventory management
\cite{Ben-Tal.Boaz.Shimrit.2009, Solyali.Cordeau.Laporte.2016}; see
also \cite{Ben-Tal.Golany.Nemirovski.Vial.2005, Fonseca.Rustem.2012,
Poss.Raack.2013}. We refer the reader to the excellent, recent tutorial
\cite{Delage.Iancu.2015} for background on ARO.

Since ARO is intractable in general
\cite{Ben-Tal.Goryashko.Guslitzer.Nemirovski.2004}, multiple
tractable approximations have been proposed for it. In certain
situations, a static, robust-optimization-based solution can be used
to approximate ARO, and sometimes this static solution is optimal
\cite{Ben-Tal.Nemirovski.1999,Bertsimas.Goyal.Lu.2014}. The {\em
affine policy\/} \cite{Ben-Tal.Goryashko.Guslitzer.Nemirovski.2004},
which forces the second-stage variables to be an affine function of
the uncertainty parameters, is another common approximation for ARO,
but it is generally suboptimal. Several nonlinear policies have also
been used to approximate ARO. Chen and Zhang \cite{Chen.Zhang.2009}
proposed the {\em extended affine policy\/} in which the primitive
uncertainty set is reparameterized by introducing auxiliary variables
after which the regular affine policy is applied. Bertsimas
et.~al.~\cite{Bertsimas.Iancu.Parrilo.2011} introduced a more accurate,
yet more complicated, approximation which forces the second-stage
variables to depend polynomially (with a user-specified, fixed degree)
on the uncertain parameters. Their approach yields a hierarchy
of Lasserre-type semidefinite approximations and can be extended
to multi-stage robust optimization. Ardestani-Jaafari and Delage
\cite{Ardestani-Jaafari.Delage.2016a} studied a robust optimization
problem featuring sums of piecewise linear functions, which is in
fact a special case of ARO, and they proposed approximations based on
mixed-integer linear programming and semidefinite programming.

The approaches just described provide upper bounds when ARO is
stated as a minimization. On the other hand, a single lower bound
can be calculated, for example, by fixing a specific value in the
uncertainty set and solving the resulting LP (linear program), and Monte Carlo
simulation over the uncertainty set can then be used to compute a
best lower bound. Finally, global approaches for solving ARO exactly
include column and constraint generation \cite{Zeng.Zhao.2013} and
Benders decomposition \cite{Bertsimas.Litvinov.Sun.Zhao.Zheng.2013,
Doulabi.Jaillet.Pesant.Rousseau.2016}.

In this paper, we consider the following two-stage adjustable robust
linear minimization problem with uncertain right-hand side:
\begin{equation} \tag{$RLP$} \label{equ:ar}
\begin{array}{lll}
v_{\AR}^* := & \min \limits_{x,y(\cdot)} & c^Tx + \max \limits_{u \in \U} d^Ty(u) \\
         & \st & Ax + By(u) \ge Fu \ \ \ \forall \; u \in \U \\
         &     & x \in \X,
\end{array}
\end{equation}
where $A \in \RR^{m \times n_1}, B \in \RR^{m \times n_2}, c \in
\RR^{n_1}$, $d \in \RR^{n_2}$, $F \in \RR^{m \times k}$ and $\X
\subseteq \RR^{n_1}$ is a closed convex set containing the first-stage
decision $x$. The uncertainty set $\U \subseteq \RR^k$ is compact,
convex, and nonempty, and in particular we model it as a slice of
a closed, convex, full-dimensional cone $\K \subseteq \RR_+\times
\RR^{k-1}$:
\begin{equation} \label{equ:UK}
\U := \{u \in \K : e_1^Tu = u_1 = 1\},
\end{equation}
where $e_1$ is the first canonical basic vector in $\RR^k$. In words,
$\K$ is the homogenization of $\U$. We choose this homogenized version
for notational convenience and note that it allows the modeling of
affine effects of the uncertain parameters. The second-stage variable is
$y(\cdot)$, formally defined as a mapping $y : \U \to \RR^{n_2}$. It is
well known that ($\AR$) is equivalent to
\begin{equation} \label{equ:ar'}
\begin{array}{lll}
v_{\AR}^* = & \min \limits_{x \in \X} & c^Tx + \max \limits_{u \in \U} \min
\limits_{y(u) \in \RR^{n_2}}
\{ d^T y(u) :  By(u) \ge Fu - Ax \},
\end{array}
\end{equation}
where $y(u)$ is a vector variable specifying the value of $y(\cdot)$ at
$u$.

Regarding ($\AR$), we make three standard assumptions.

\begin{assumption} \label{ass:tractable}
The closed, convex set $\X$ is computationally tractable, and the
closed, convex cone $\K$ is full-dimensional and computationally
tractable.
\end{assumption}

\noindent For example, $\X$ and $\K$ could be represented using a
polynomial number of linear, second-order-cone, and semidefinite
inequalities, each of which possesses a polynomial-time separation
oracle \cite{Grotschel.Lovasz.Shrijver.1981}.

\begin{assumption} \label{ass:feas}
Problem $(\AR)$ is feasible, i.e., there exists a choice $x \in \X$ and
$y(\cdot)$ such that $Ax + B y(u) \ge Fu$ for all $u \in \U$.
\end{assumption}

\noindent The existence of an affine policy, which can be checked
in polynomial time, is sufficient to establish that Assumption
\ref{ass:feas} holds.

\begin{assumption} \label{ass:boundedval}
Problem $(\AR)$ is bounded, i.e., $v^*_{\AR}$ is finite. 
\end{assumption}

\noindent Note that the negative directions of recession $\{ \tau : d^T
\tau < 0, B \tau \ge 0 \}$ for the innermost LP in (\ref{equ:ar'}) do
not depend on $x$ and $u$. Hence, in light of Assumptions \ref{ass:feas}
and \ref{ass:boundedval}, there must exist no negative directions of
recession; otherwise, $v^*_{\AR}$ would clearly equal $-\infty$. So
every innermost LP in (\ref{equ:ar'}) is either feasible with bounded value or
infeasible. In particular, Assumption \ref{ass:feas} implies that at
least one such LP is feasible with bounded value. It follows that the specific
associated dual LP $\max\{ (Fu - Ax)^T w : B^T w = d, w \ge 0 \}$ is
also feasible with bounded value. In particular, the fixed set
\[
    \W := \{ w \ge 0 : B^T w = d \}
\]
is nonempty. For this paper, we also make one additional assumption:
\begin{assumption} \label{ass:rcr}
Problem $(\AR)$ possesses relatively complete recourse, i.e., for all $x
\in \X$ and $u \in \U$, the innermost LP in (\ref{equ:ar'}) is feasible.
\end{assumption}

\noindent By the above discussion, Assumption \ref{ass:rcr} guarantees
that the innermost LP is feasible with bounded value, and hence every
dual $\max\{ (Fu - Ax)^T w : B^T w = d, w \ge 0 \}$ attains its optimal
value at an extreme point of $\W$.

In Section \ref{sec:copositive}, under Assumptions
\ref{ass:tractable}--\ref{ass:rcr}, we reformulate $(\AR)$ as an
equivalent copositive program, which first and foremost enables a
new perspective on two-stage robust optimization. Compared to most
existing copositive approaches for difficult problems, ours exploits
copositive duality; indeed, Assumption \ref{ass:rcr} is sufficient for
establishing strong duality between the copositive primal and dual. In
Section \ref{sec:tractableaffine}, we then apply a similar approach
to derive a new formulation of the affine policy, which is then, in
Section \ref{sec:results}, directly related to the copositive version
of $(\AR)$. This establishes two extremes: on the one side is the
copositive representation of ($\AR$), while on the other is the affine
policy. Section \ref{sec:results} also proposes semidefinite-based
approximations of $(\AR)$ that interpolate between the full copositive
program and the affine policy. Finally, in Section \ref{sec:examples},
we investigate several examples from the literature that demonstrate our
bounds can significantly improve the affine-policy value. In particular,
we prove that our semidefinite approach solves a class of instances of
increasing size for which the affine policy admits arbitrarily large
gaps. We end the paper with a short discussion of future directions in
Section \ref{sec:future}.

It is important to note that, even if Assumption \ref{ass:rcr} does
not hold, our copositive program still yields a valid upper bound on
$v^*_{\AR}$ that is at least as strong as the affine policy. More
comments are provided at the end of Section \ref{sec:copositive}; see
also Section \ref{sec:tractableaffine}.

We mention two studies that are closely related to ours. Chang
et~al.~\cite{Chang.Rao.Tawarmalani.2017} consider a particular
application of two-stage ARO in network design under uncertain
demands and uncertain path failures; their primary problem does not
contain explicit first-stage variables (although they do consider
an extension which does). The authors use LP duality to reformulate
their problem as a bilinear programming problem and subsequently
approximate it via the standard, LP-based reformulation-linearization
technique (RLT). They also show that their approximation improves
the affine policy. In a similar vein, Ardestani-Jaafari and
Delage~\cite{Ardestani-Jaafari.Delage.2016} introduce an approach for
($\AR$) that applies LP duality, RLT-style and semidefinite valid
inequalities, and semidefinite duality to obtain an approximation
of ($\AR$). In comparison to \cite{Chang.Rao.Tawarmalani.2017} and
\cite{Ardestani-Jaafari.Delage.2014}, we use copositive duality to
reformulate ($\AR$) exactly and then approximate it using semidefinite
programming. Although all three approaches are closely related, we
prefer our approach because it clearly separates the use of conic
duality from the choice of approximation. We also feel that our
derivation is relatively compact. In addition, both our paper and
\cite{Ardestani-Jaafari.Delage.2016} consider a general uncertainty
set but \cite{Ardestani-Jaafari.Delage.2016} focuses on a polyhedral
$\U$ from a practical point of view whereas our approach focuses on
the class of uncertainty sets that can be represented, say, by linear,
second-order-cone, and semidefinite inequalities.


On the same day (September 23, 2016) as the original version of this
article was posted on the online archive sites {\em Optimization
Online\/} and {\em arXiv\/}, the paper \cite{Hanasusanto.Kuhn.2016}
by Hanasusanto and Kuhn was also posted for the first time on
{\em Optimization Online\/}. It turns out that Corollary 1
of \cite{Hanasusanto.Kuhn.2016} is equivalent to our Theorem
\ref{thm:equivalence}, and so we mention it here for the reader's
reference. However, the copositive representations in the two papers
appear quite different due to notational choices, e.g., we use
homogenization and a general cone, while \cite{Hanasusanto.Kuhn.2016}
does not homogenize and focuses on polyhedral cones. In other aspects,
the two papers are quite different, e.g., our paper connects the
copositive representation with the affine policy, and we present a class
of examples that are solved exactly by our semidefinite approximation.

\subsection{Notation, terminology, and background} 

Let $\RR^n$ denote $n$-dimensional Euclidean space represented as
column vectors, and let $\RR_+^n$ denote the nonnegative orthant in
$\RR^n$. For a scalar $p \ge 1$, the $p$-norm of $v \in \RR^n$ is
defined $\|v\|_p := (\sum_{i=1}^n |v_i|^p)^{1/p}$, e.g., $\|v\|_1 =
\sum_{i=1}^n |v_i|$. We will drop the subscript for the $2$-norm, i.e.,
$\|v\| := \|v\|_2$. For $v,w \in \RR^n$, the inner product of $v$ and
$w$ is $v^Tw := \sum_{i=1}^n v_iw_i$. The symbol $\mathbbm{1}_n$ denotes
the all-ones vector in $\RR^n$.

The space $\RR^{m \times n}$ denotes the set of real $m \times n$
matrices, and the trace inner product of two matrices $A, B \in \RR^{m
\times n}$ is $A\bullet B := \text{trace}(A^TB)$. $\SYM^n$ denotes
the space of $n \times n$ symmetric matrices, and for $X \in \SYM^n$,
$X \succeq 0$ means that $X$ is positive semidefinite. In addition,
$\diag(X)$ denotes the vector containing the diagonal entries of
$X$, and $\Diag(v)$ is the diagonal matrix with vector $v$ along its
diagonal. We denote the null space of a matrix $A$ as $\Null(A)$, i.e.,
$\Null(A) := \{x : Ax =0\}$. For ${\cal K} \subseteq \RR^n$ a closed,
convex cone, ${\cal K}^*$ denotes its dual cone. For a matrix $A$ with
$n$ columns, the inclusion $\text{Rows}(A) \in {\cal K}$ indicates that
the rows of $A$---considered as column vectors---are members of ${\cal
K}$.

We next introduce some basics of {\em copositive programming} with
respect to the cone ${\cal K} \subseteq \RR^n$. The {\em
copositive cone\/} is defined as
\[
    \COP({\cal K}) := \{ M \in \SYM^n: x^T M x \ge 0 \ \forall \ x \in {\cal K} \},
\]
and its dual cone, the {\em completely positive cone\/}, is
\[
    \CP({\cal K}) := \{ X \in \SYM^n: X = \textstyle{\sum_i} x^i (x^i)^T, \ x^i \in {\cal K} \},
\]
where the summation over $i$ is finite but its cardinality is
unspecified. The term {\em copositive programming\/} refers to linear
optimization over $\COP({\cal K})$ or, via duality, linear optimization
over $\CP({\cal K})$. In fact, these problems are sometimes called
{\em generalized copositive programming\/} or {\em set-semidefinite
optimization\/} \cite{Burer.Dong.2012,Eichfelder.Jahn.2008} in contrast
with the standard case ${\cal K} = \RR_+^n$. In this paper, we work with
generalized copositive programming, although we use the shorter phrase
for convenience.

Finally, for the specific dimensions $k$ and $m$ of problem ($\AR$),
we let $e_i$ denote the $i$-th standard basis vector in $\RR^k$, and
similarly, $f_j$ denotes the $j$-th standard basis vector in $\RR^m$. We
will also use $g_1 := {e_1 \choose 0} \in \RR^{k+m}$.

\section{A Copositive Reformulation} \label{sec:copositive}

In this section, we construct a copositive representation of ($\AR$)
under Assumptions \ref{ass:tractable}--\ref{ass:rcr} by first
reformulating the inner maximization of (\ref{equ:ar'}) as a copositive
problem and then employing copositive duality.

Within (\ref{equ:ar'}), define 
\[
    \p(x) := \max_{u \in \U} \min_{y(u) \in \RR^{n_2}} \{ d^T y(u) : By(u) \ge Fu - Ax \}.
\]

\noindent The dual of the inner minimization is $\max_{w \in \W} (Fu -
Ax)^T w$, which is feasible as discussed in the Introduction. Hence,
strong duality for LP implies
\begin{equation} \label{equ:Q(x)}
    \p(x) = \max_{u \in \U} \max_w \{ (Fu - Ax)^T w : w \in \W \}
          = \max_{(u,w) \in \U \times \W} \ (Fu - Ax)^T w,
\end{equation}
In words, $\p(x)$ equals the optimal value of a bilinear program over
convex constraints, which is NP-hard in general \cite{Konno.1976}.

It holds also that $\p(x)$ equals the optimal value of an associated
copositive program (see \cite{Burer.2009,Burer.2011} for example), which
we now describe. Define
\begin{equation} \label{equ:zE}
    z := {u \choose w} \in \RR^{k + m}, \quad
    E := \begin{pmatrix} -de_1^T & B^T \end{pmatrix} \in \RR^{n_2 \times (k + m)},
\end{equation}
where $e_1 \in \RR^k$ is the first coordinate vector, and homogenize
via the relationship (\ref{equ:UK}) and the definition of $\W$:
\begin{align*}
\p(x) = \max \; & \ (F - Axe_1^T) \bullet wu^T  \\  
    \st \; \; & \ E z = 0  \\ 
    & \ z \in \Q, \quad g_1^Tz = 1,
\end{align*}
where $g_1$ is the first coordinate vector in $\RR^{k+m}$. The
copositive representation is thus
\begin{align} \label{equ:qxcppstandard}
    \p(x) = \max \; & (F - Axe_1^T) \bullet Z_{21} \\
\st \; \; &\diag(EZE^T) = 0 \nonumber \\
     & Z \in \CP(\Q), \quad g_1 g_1^T \bullet Z = 1, \nonumber
\end{align}
where $Z$ has the block structure
\[
    Z = \begin{pmatrix}
        Z_{11} & Z_{21}^T \\
        Z_{21} & Z_{21}
    \end{pmatrix} \in {\cal S}^{k+m}.
\]
Note that under positive semidefiniteness, which is implied by the
completely positive constraint, the constraint $\diag(EZE^T) = 0$ is
equivalent to $ZE^T = 0$; see proposition 1 of \cite{Burer.2011}, for
example. For the majority of this paper, we will focus on this second
version:
\begin{align} \label{equ:qxcpp}
    \p(x) = \max \; & (F - Axe_1^T) \bullet Z_{21} \\
\st \; \; & ZE^T = 0 \nonumber \\
     & Z \in \CP(\Q), \quad g_1 g_1^T \bullet Z = 1. \nonumber
\end{align}

By standard theory \cite[corollary 3.2d]{Schrijver.1986}, the extreme
points of $\W$ are contained in a ball $w^T w \le r_w$, where $r_w
> 0$ is a radius that is polynomially computable and representable
in the encoding length of the entries of $B$ and $d$ (assuming those
entries are rational). Hence, Assumption \ref{ass:rcr} guarantees that
the optimal value of $\max\{ (Fu - Ax)^T w : B^T w = d, w \ge 0 \}$
does not change when $w^T w \le r_w$ is enforced. In addition, because
$\U$ is bounded by Assumption \ref{ass:tractable}, there exists a
sufficiently large scalar $r_z > 0$ such that the constraint $z^T z
\le r_z$ is redundant. It follows from these observations that, in the
preceding argument, we can enforce $z^T z = u^T u + w^T w \le r := r_z +
r_w$ without cutting off all optimal solutions of (\ref{equ:Q(x)}).
Thus, the lifted and linearized constraint $I \bullet Z \le r$ can be
added to (\ref{equ:qxcpp}) without changing its optimal value, although
some feasible directions of recession may be cut off. We arrive at
\begin{align} \label{equ:qxcpp'}
    \p(x) = \max \; & (F - Axe_1^T) \bullet Z_{21} \\
\st \; \; & ZE^T = 0, \quad I \bullet Z \le r \nonumber \\
     & Z \in \CP(\Q), \quad g_1 g_1^T \bullet Z = 1. \nonumber
\end{align}
We remark that the procedure of bounding the vertices of $\W$
is similar in spirit to the scheme proposed in Proposition 6 of
\cite{Ardestani-Jaafari.Delage.2016}. 

Letting $\Lambda \in \RR^{(k + m) \times n_2}$, $\lambda \in \RR$,
and $\rho \in \RR$ be the respective dual multipliers of $ZE^T = 0$,
$g_1 g_1^T \bullet Z = 1$, and $I \bullet Z \le r$, standard
conic duality theory implies the dual of (\ref{equ:qxcpp'}) is
\begin{equation} \label{equ:cop_qx}
\begin{array}{ll}
\min \limits_{\lambda, \Lambda, \rho} & \lambda + r \rho \\
\st &  \lambda g_1 g_1^T - \tfrac12 G(x) + \tfrac12(E^T \Lambda^T + \Lambda E)
+ \rho I \in \COP(\Q) \\
& \rho \ge 0
\end{array}
\end{equation}
where
\[
    G(x) :=
    \begin{pmatrix}
        0 & (F - Axe_1^T)^T \\
             F - Axe_1^T & 0 
    \end{pmatrix}
    \in {\cal S}^{k + m}
\]
is affine in $x$. Holding all other dual variables fixed, for $\rho
> 0$ large, the matrix variable in (\ref{equ:cop_qx}) is strictly
copositive---in fact, positive definite---which establishes that
Slater's condition is satisfied, thus ensuring strong duality:

\begin{proposition} 
Under Assumption \ref{ass:rcr}, suppose $r > 0$ is a constant such that
$z^T z \le r$ is satisfied by all $u \in \U$ and all extreme points $w
\in \W$, where $z = (u,w)$. Then the optimal value of (\ref{equ:cop_qx})
equals $\p(x)$.
\end{proposition}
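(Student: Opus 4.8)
The plan is to establish strong duality between the copositive primal \eqref{equ:qxcpp'} and its conic dual \eqref{equ:cop_qx}, for which it suffices (by standard conic duality, e.g.\ the theory in \cite{Ben-Tal.Nemirovski.1999} or any reference on conic programming) to verify two things: first, that the primal value equals $\p(x)$, and second, that one of the two problems satisfies Slater's condition, i.e.\ admits a strictly feasible point relative to the conic constraint. The first point has already been argued in the text leading up to \eqref{equ:qxcpp'}: combining the LP strong duality in \eqref{equ:Q(x)}, the Burer-type completely positive reformulation of the bilinear program in \eqref{equ:qxcpp}, and the observation that adding the redundant trace bound $I \bullet Z \le r$ does not cut off all optimal solutions (using the polynomial bound on the extreme points of $\W$ and the boundedness of $\U$), we get that the optimal value of \eqref{equ:qxcpp'} is exactly $\p(x)$. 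So the real content of the proof is the Slater condition for the dual \eqref{equ:cop_qx}.

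First I would exhibit an explicit strictly feasible point for \eqref{equ:cop_qx}. Fix $\Lambda = 0$ and $\lambda = 0$ (or any fixed values), and consider the matrix $\tfrac12 G(x)$ restricted to the data at hand; choose $\rho > 0$ large enough that $\rho I - \tfrac12 G(x)$ is positive definite on all of $\RR^{k+m}$ — this is possible because $G(x)$ is a fixed symmetric matrix (affine in the fixed $x$) and $\rho I$ dominates any bounded symmetric perturbation once $\rho$ exceeds the largest eigenvalue of $\tfrac12 G(x)$. Then the matrix variable $\lambda g_1 g_1^T - \tfrac12 G(x) + \tfrac12(E^T\Lambda^T + \Lambda E) + \rho I = \rho I - \tfrac12 G(x)$ is positive definite, hence $z^T(\cdot)z > 0$ for every nonzero $z$, and in particular for every nonzero $z \in \Q \setminus \{0\}$; that is, it lies in the interior of $\COP(\Q)$. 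Together with $\rho > 0$, this is a Slater point, so strong duality holds and the two optimal values coincide; chaining with the first point gives that \eqref{equ:cop_qx} has optimal value $\p(x)$.

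The main obstacle — really the only subtlety — is making precise the claim that a positive definite matrix lies in the \emph{interior} of $\COP(\Q)$ (not merely in $\COP(\Q)$), which is what Slater's condition demands. Since $\Q = \K \times \RR^m_+$ is a closed convex cone contained in $\RR^{k+m}$, positive definiteness on $\RR^{k+m}$ certainly implies strict copositivity with respect to $\Q$; and because $\COP(\Q) \supseteq \SYM^{k+m}_{++}$ and the latter is open in $\SYM^{k+m}$, any positive definite matrix is an interior point of $\COP(\Q)$. (The text already signals this with the parenthetical ``in fact, positive definite.'') I would also note, for completeness, that $\p(x)$ is finite here: Assumption~\ref{ass:rcr} together with Assumptions~\ref{ass:feas}--\ref{ass:boundedval}, as discussed in the Introduction, ensures each innermost LP and its dual are feasible with bounded value, so \eqref{equ:Q(x)} is a maximization of a continuous function over a nonempty compact set and hence attains a finite maximum — this rules out the degenerate case where strong duality could fail due to an infinite primal value. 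With finiteness and Slater in hand, the equality of \eqref{equ:cop_qx}'s value with $\p(x)$ follows immediately.
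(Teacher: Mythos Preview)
Your proposal is correct and takes essentially the same approach as the paper: the paper's argument, given in the sentence immediately preceding the proposition, is exactly to hold the other dual variables fixed and take $\rho > 0$ large so that the matrix variable becomes positive definite (hence strictly copositive), establishing Slater's condition for \eqref{equ:cop_qx} and thus strong duality. Your write-up simply fleshes out the details the paper leaves implicit---the interior-of-$\COP(\Q)$ justification and the finiteness of $\p(x)$---but the core idea is identical.
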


Now, with $\p(x)$ expressed as a minimization that depends affinely on
$x$, we can collapse (\ref{equ:ar'}) into a single minimization that
is equivalent to $(\AR)$:
\begin{equation} \tag{$\overline{RLP}$} \label{equ:cop_ar} 
\begin{array}{ll}
\min \limits_{x, \lambda, \Lambda, \rho} & c^T x + \lambda + r \rho \\
\st & x \in \X, \
\lambda g_1 g_1^T - \tfrac12 G(x) + \tfrac12(E^T \Lambda^T + \Lambda E) + \rho I
\in \COP(\Q) \\ & \rho \ge 0.
\end{array}
\end{equation}

\begin{theorem} \label{thm:equivalence}
The optimal value of ($\overline{RLP}$) equals $v^*_{\AR}$.
\end{theorem}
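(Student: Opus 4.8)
The plan is to chain together the equivalences already established in the excerpt. The starting point is the reformulation (\ref{equ:ar'}), which expresses $v^*_{\AR}$ as $\min_{x \in \X} \{ c^Tx + \p(x) \}$, where $\p(x)$ is the inner max-min. The Proposition immediately preceding the theorem states that, under Assumption \ref{ass:rcr} and for the constant $r > 0$ chosen as in the excerpt, $\p(x)$ equals the optimal value of the minimization problem (\ref{equ:cop_qx}). Substituting this identity for $\p(x)$ into $\min_{x \in \X}\{c^Tx + \p(x)\}$ yields
\[
v^*_{\AR} = \min_{x \in \X} \left\{ c^Tx + \min_{\lambda, \Lambda, \rho}\left\{ \lambda + r\rho :
\lambda g_1 g_1^T - \tfrac12 G(x) + \tfrac12(E^T\Lambda^T + \Lambda E) + \rho I \in \COP(\Q),\ \rho \ge 0 \right\} \right\}.
\]
The remaining step is the routine observation that a minimum over $x$ of an expression that is itself a minimum over $(\lambda,\Lambda,\rho)$ can be merged into a single joint minimization over $(x,\lambda,\Lambda,\rho)$, since the feasible region for the inner variables depends on $x$ only through the copositivity constraint, and the outer objective $c^Tx$ does not involve the inner variables. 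This joint minimization is exactly ($\overline{RLP}$), so its optimal value is $v^*_{\AR}$.

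I would want to be slightly careful about two bookkeeping points. First, I should confirm that the Proposition's hypothesis on $r$ is indeed met by the same $r$ appearing in ($\overline{RLP}$); this is the $r := r_z + r_w$ constructed in the paragraph before (\ref{equ:qxcpp'}), which by the cited bounds on the vertices of $\W$ and the boundedness of $\U$ satisfies $z^Tz \le r$ for all $u \in \U$ and all extreme points $w \in \W$. Second, I should note that for every fixed $x \in \X$ the inner problem (\ref{equ:cop_qx}) is feasible and attains $\p(x)$ (again from the Proposition, together with the fact that $\p(x)$ is finite for each $x\in\X$ by Assumptions \ref{ass:feas}--\ref{ass:rcr} — every innermost LP is feasible with bounded value, so the bilinear program (\ref{equ:Q(x)}) has finite value), so that the interchange of the two minimizations introduces no issues with $\pm\infty$ or with the minimum failing to exist. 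Assumption \ref{ass:feas} further guarantees the overall problem is not vacuous.

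Honestly, there is no real obstacle here: the theorem is a one-line corollary of the Proposition plus the trivial fact that nested minimizations collapse. The only thing resembling a subtlety — and it is worth a sentence in the proof — is making explicit that the bound $r$ in ($\overline{RLP}$) is the one the Proposition requires, so that the per-$x$ substitution $\p(x) = $ opt-value of (\ref{equ:cop_qx}) is valid uniformly in $x \in \X$. After that, collapsing $\min_x \min_{\lambda,\Lambda,\rho}$ into $\min_{x,\lambda,\Lambda,\rho}$ and recognizing the result as ($\overline{RLP}$) finishes the argument.
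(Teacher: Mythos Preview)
Your proposal is correct and follows exactly the paper's approach: the paper itself does not give a separate proof of Theorem~\ref{thm:equivalence}, but simply states it as the result of ``collapsing'' (\ref{equ:ar'}) into a single minimization once $\p(x)$ has been expressed via the copositive dual (\ref{equ:cop_qx}) by the preceding Proposition. Your added bookkeeping about the uniform choice of $r$ and the finiteness of $\p(x)$ for each $x\in\X$ is accurate and, if anything, more careful than the paper's presentation.
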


An equivalent version of (\ref{equ:cop_ar}) can be derived
based on the representation of $\p(x)$ in
(\ref{equ:qxcppstandard}):
\begin{equation} \label{equ:cop_ar'} 
\begin{array}{ll}
\min \limits_{x, \lambda, v, \rho} & c^T x + \lambda + r \rho \\
\st & x \in \X, \
\lambda g_1 g_1^T - \tfrac12 G(x) + E^T\Diag(v)E + \rho I
\in \COP(\Q) \\ & \rho \ge 0.
\end{array}
\end{equation}
Our example in Section \ref{ssec:example} will be based
on this version.

We remark that, even if Assumption \ref{ass:rcr} fails and strong
duality between (\ref{equ:qxcpp'}) and (\ref{equ:cop_qx}) cannot be
established, it still holds that the optimal value of ($\overline{RLP}$)
is an upper bound on $v^*_{\AR}$. Note that, in this case,
(\ref{equ:qxcpp'}) should be modified to exclude $I \bullet Z \le r$,
and $\rho$ should be set to 0 in (\ref{equ:cop_qx}).

\section{The Affine Policy} \label{sec:tractableaffine}

Under the affine policy, the second-stage decision variable $y(\cdot)$
in $(\AR)$ is modeled as a linear function of $u$ via a free variable $Y
\in \RR^{n_2 \times k}$:

\begin{equation} \tag{{\em Aff\/}} \label{equ:ap} 
\begin{array}{lll}
v_{\AP}^* := & \min \limits_{x, y(\cdot), Y} & c^Tx + \max \limits_{u \in \U} d^Ty(u) \\
                    & \st    & Ax + By(u) \ge Fu \ \ \ \forall \; u \in \U \\
                    &        & y(u) = Yu \quad\quad\quad\quad \ \forall \; u \in \U \\
                    &        & x \in \X. 
\end{array}
\end{equation}

\noindent Here, $Y$ acts as a ``dummy'' first-stage decision, and so
$(\AP)$ can be recast as a regular robust optimization
problem over $\U$. Specifically, using standard techniques
\cite{Ben-Tal.Goryashko.Guslitzer.Nemirovski.2004}, $(\AP)$ is
equivalent to
\begin{equation} \label{equ:conic_ap}
\begin{array}{ll}
\min \limits_{x,Y,\lambda}  & c^Tx + \lambda \\
 \st & \lambda e_1 - Y^Td \in {\K}^*   \\
      & \text{Rows}(Axe_1^T - F + BY) \in {\K}^* \\
      & x \in \X.
\end{array}
\end{equation}
Problem (\ref{equ:conic_ap}) is tractable, but in general, the affine
policy is only an approximation of $(\AR)$, i.e., $v_{\AR}^* <
v_{\AP}^*$. In what follows, we provide a copositive
representation ($\overline{\AP}$) of (\ref{equ:ap}), which is then used
to develop an alternative formulation $(\M)$ of (\ref{equ:conic_ap}).
Later, in Section \ref{sec:results}, problem $(\M)$ will be compared
directly to $(\overline{\AR})$.


Following the approach of Section \ref{sec:copositive}, we may express
$(\AP)$ as $\min \limits_{x \in \X, Y} c^Tx + \P(x,Y)$ where
\[
    \P(x,Y) := \max_{u \in \U} \min_{y(u) \in \RR^{n_2}}
    \{ d^T y(u) : By \ge Fu - Ax, \ y(u) = Yu \}.
\]
Note that we do not replace $y(u)$ everywhere by $Yu$ in the definition
of $\P(x,Y)$; this is a small but critical detail in the subsequent
derivations. The inner minimization has dual
\begin{align*}
    &\max \limits_{w \ge 0, v} \{ (Fu - Ax)^T w + (Yu)^T v : B^T w + v = d \} \\
    &= \max \limits_{w \ge 0} \left( (Fu - Ax)^T w + (Yu)^T (d - B^T w) \right).
\end{align*}
After collecting terms, homogenizing, and converting to copostive
optimization, we have
\begin{equation} \label{equ:cp_qxy}
\begin{array}{ll}
    \P(x,Y) = &\max \quad \frac12 (G(x) - H(Y)) \bullet Z \\
&\st \quad \, Z \in \CP(\Q), \quad g_1 g_1^T \bullet Z = 1
\end{array}
\end{equation}
with dual
\begin{equation} \label{equ:cop_qxy} 
\begin{array}{ll}
\min \limits_{\lambda} & \lambda \\
\st &  \lambda g_1 g_1^T - \tfrac12 G(x) + \tfrac12 H(Y)
\in \COP(\Q),
\end{array}
\end{equation}
where $G(x)$ is defined as in Section \ref{sec:copositive} and
\[
    H(Y) := \begin{pmatrix}
        - e_1 d^T Y - Y^T d e_1^T & (BY)^T \\
        BY & 0
    \end{pmatrix}
    \in {\cal S}^{k + m}.
\]
Since $\K$ has interior by Assumption \ref{ass:tractable}, it
follows that (\ref{equ:cp_qxy}) also has interior, and so Slater's
condition holds, implying strong duality between (\ref{equ:cp_qxy})
and (\ref{equ:cop_qxy}). Thus, repeating the logic of Section
\ref{sec:copositive}, $(\AP)$ is equivalent to

\begin{equation} \tag{$\overline{\text{\it Aff\/}}$} \label{equ:cop_ap}
\begin{array}{ll}
\min \limits_{x,\lambda, Y} & c^T x + \lambda \\
\st & x \in \X, \quad \lambda g_1 g_1^T - \tfrac12 G(x) + \tfrac12 H(Y)
\in \COP(\Q).
\end{array}
\end{equation}

\begin{proposition} 
    The optimal value of (\ref{equ:cop_ap}) is $v^*_{\AP}$. 
\end{proposition}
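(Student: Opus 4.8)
The plan is to mirror exactly the argument already carried out for $(\AR)$ in Section~\ref{sec:copositive}, but now tracking the extra variable $Y$ through the derivation. Concretely, we want to show two things: first, that $\P(x,Y)$ as defined equals the optimal value of the copositive program \eqref{equ:cp_qxy}; second, that strong duality upgrades this to the dual form \eqref{equ:cop_qxy}, after which substituting back into $\min_{x \in \X, Y} c^Tx + \P(x,Y)$ collapses the whole problem into \eqref{equ:cop_ap}. The displayed chain in the text already records the LP-dual of the inner minimization and the resulting bilinear maximization over $(u,w) \in \U \times \W$ with $\W = \{w \ge 0 : B^Tw = d\}$ replaced (here, crucially, $\W$ is \emph{not} used because the equality $B^Tw + v = d$ is kept in the dual and then $v$ is eliminated via $v = d - B^Tw$); so the first task is really just bookkeeping to confirm the objective $(Fu - Ax)^Tw + (Yu)^T(d - B^Tw)$ homogenizes to $\tfrac12(G(x) - H(Y)) \bullet zz^T$ with $z = (u,w)$.

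First I would verify the homogenization identity. With $z = (u,w)$, one checks that $(Fu-Ax)^Tw = \tfrac12 G(x) \bullet zz^T$ when $e_1^Tu = 1$ — this is exactly the computation underlying \eqref{equ:Q(x)} in Section~\ref{sec:copositive}, since $G(x)$ has off-diagonal block $F - Axe_1^T$ and $w^T(F - Axe_1^T)u = w^TFu - (Ax)^Tw$ using $e_1^Tu = 1$. Similarly $(Yu)^T(d - B^Tw) = d^TYu - w^TBYu$, and one checks this equals $-\tfrac12 H(Y) \bullet zz^T$: the $(1,1)$ block $-e_1d^TY - Y^Tde_1^T$ contributes $-2 d^TYu$ against $u$... wait, $u^T(-e_1d^TY)u = -(e_1^Tu)(d^TYu) = -d^TYu$, so the symmetric $(1,1)$ block gives $-2d^TYu$, halved to $-d^TYu$; and the off-diagonal $BY$ block gives $-w^TBYu$ doubled and halved to $-w^TBYu$. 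Hence $\tfrac12 H(Y)\bullet zz^T = -d^TYu + w^TBYu = -(Yu)^T(d - B^Tw)$, so the dual objective is $\tfrac12(G(x) - H(Y)) \bullet zz^T$. Then invoking the standard completely-positive representation of a bilinear/quadratic program over the convex set $\U \times \{w \ge 0\}$ homogenized by $g_1^Tz = 1$ (as in \cite{Burer.2009,Burer.2011}, already cited in the text), we obtain \eqref{equ:cp_qxy}, with the diagonal-of-$EZE^T$ type constraint now absent because there is no analogue of $Ez = 0$ here.

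Next I would invoke strong conic duality. The text already observes that $\K$ has nonempty interior by Assumption~\ref{ass:tractable}, hence $\widehat{\mathcal U} \times \RR^m_+$ has nonempty interior, hence $\CP(\widehat{\mathcal U}\times\RR^m_+)$ has nonempty interior, so \eqref{equ:cp_qxy} is strictly feasible and strong duality gives that its value equals that of \eqref{equ:cop_qxy} and the latter is attained. Finally, substituting $\P(x,Y) = \min\{\lambda : \lambda g_1g_1^T - \tfrac12 G(x) + \tfrac12 H(Y) \in \COP(\widehat{\mathcal U}\times\RR^m_+)\}$ into $v^*_{\AP} = \min_{x\in\X,\,Y} c^Tx + \P(x,Y)$ and merging the three minimizations yields \eqref{equ:cop_ap}. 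One subtlety worth flagging explicitly — and the one place this differs from the $(\AR)$ derivation — is the remark in the text that $y(u)$ is \emph{not} globally replaced by $Yu$ in the definition of $\P(x,Y)$: the constraint $y(u) = Yu$ is dualized (producing the free multiplier $v$), and it is the elimination $v = d - B^Tw$ that produces the $H(Y)$ term; I would make sure the writeup records that the equality constraint $y(u) = Yu$ contributes a \emph{free} dual variable and that after eliminating it the inner dual is a maximization in $w \ge 0$ only, so that the completely-positive lift is over the same cone $\widehat{\mathcal U}\times\RR^m_+$ as before. The main (and only real) obstacle is getting the signs and factors of $\tfrac12$ in the $G(x)$–$H(Y)$ homogenization correct; everything else is a direct transcription of the Section~\ref{sec:copositive} argument.
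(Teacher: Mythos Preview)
Your proposal is correct and follows essentially the same route as the paper: the paper does not give a separate proof block for this proposition but relies on the derivation immediately preceding it (LP-dualize the inner minimization keeping $y(u)=Yu$ as a constraint, eliminate the free multiplier $v=d-B^Tw$, homogenize to obtain \eqref{equ:cp_qxy}, invoke Slater via the interior of $\K$ for strong duality with \eqref{equ:cop_qxy}, and collapse into $(\overline{\AP})$). Your write-up simply makes the $\tfrac12(G(x)-H(Y))\bullet zz^T$ bookkeeping explicit, which the paper leaves to the reader.
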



We now show that $\COP(\Q)$ in $(\overline{\AP})$ can
be replaced by a particular inner approximation without changing the
optimal value. Moreover, this inner approximation is tractable, so that
the resulting optimization problem serves as an alternative to the formulation
(\ref{equ:conic_ap}) of ($\AP$).

Using the mnemonic ``IA'' for ``inner approximation,'' we define 
\[
\M(\Q) := \left\{
    S = \begin{pmatrix} S_{11} & S_{21}^T \\ S_{21} & S_{22} \end{pmatrix} : 
\begin{array}{l}
    S_{11} = e_1\alpha^T + \alpha e_1^T, \  \alpha \in \K^*, \\
    \text{Rows}(S_{21}) \in \K^*, \  S_{22} \ge 0
\end{array}
\right\}.
\]
This set is tractable because it is defined by affine constraints in
$\K^*$ as well as nonnegativity constraints. Moreover, $\M(\Q)$ is
indeed a subset of $\COP(\Q)$:

\begin{lemma} 
$\M(\Q) \subseteq \COP(\Q)$.
\end{lemma}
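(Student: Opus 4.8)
The goal is to show that every $S \in \M(\Q)$ satisfies $z^T S z \ge 0$ for all $z \in \Q = \K \times \RR^m_+$. I would start by writing $z = {u \choose w}$ with $u \in \K$ and $w \in \RR^m_+$, and expanding the quadratic form block-by-block:
\[
    z^T S z = u^T S_{11} u + 2 w^T S_{21} u + w^T S_{22} w.
\]
The plan is to argue that each of the three terms is nonnegative under the defining constraints of $\M(\Q)$. The third term $w^T S_{22} w$ is nonnegative because $S_{22} \ge 0$ entrywise and $w \ge 0$. The middle term $2 w^T S_{21} u$ is nonnegative because each row of $S_{21}$, viewed as a column vector, lies in $\K^*$, so $(S_{21} u)_j = (\text{row}_j)^T u \ge 0$ for every $j$ since $u \in \K$; combined with $w \ge 0$ this gives $w^T S_{21} u \ge 0$.

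The only term requiring a small observation is the first one. Here $S_{11} = e_1 \alpha^T + \alpha e_1^T$ with $\alpha \in \K^*$, so
\[
    u^T S_{11} u = 2 (e_1^T u)(\alpha^T u).
\]
Now I would use the structural assumption that $\K \subseteq \RR_+ \times \RR^{k-1}$, which forces $e_1^T u = u_1 \ge 0$ for every $u \in \K$; together with $\alpha^T u \ge 0$ (since $\alpha \in \K^*$ and $u \in \K$), this yields $u^T S_{11} u \ge 0$. Summing the three nonnegative contributions gives $z^T S z \ge 0$, and since $z \in \Q$ was arbitrary, $S \in \COP(\Q)$.

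The main (and really the only) subtlety is the first term: one needs $u_1 \ge 0$ on $\K$, which is exactly why the paper builds the homogenization cone inside $\RR_+ \times \RR^{k-1}$ rather than an arbitrary cone. Everything else is a routine sign check. I would therefore keep the writeup short, emphasizing the block expansion and the three sign arguments, and explicitly flagging the use of $\K \subseteq \RR_+ \times \RR^{k-1}$ as the place where the normalization structure of $\U$ enters.
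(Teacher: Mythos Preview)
Your proposal is correct and follows essentially the same approach as the paper: block-expand $z^T S z$ into the three terms $u^T S_{11} u$, $2 w^T S_{21} u$, $w^T S_{22} w$, and verify each is nonnegative, with the only nontrivial step being the observation that $e_1^T u \ge 0$ on $\K$ (equivalently $e_1 \in \K^*$) so that $u^T S_{11} u = 2(e_1^T u)(\alpha^T u) \ge 0$. The paper's writeup is virtually identical, differing only in notation ($p,q$ instead of $u,w$) and in phrasing the key fact as ``$e_1 \in \K^*$'' rather than ``$\K \subseteq \RR_+ \times \RR^{k-1}$''.
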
 

\begin{proof}
We first note that (\ref{equ:UK}) implies that the first
coordinate of every element of $\K$ is nonnegative; hence, $e_1 \in
\K^*$. Now, for arbitrary ${p \choose q} \in \Q$ and $S
\in \M(\Q)$, we prove $t := {p \choose q}^T S {p \choose
q} \ge 0$. We have
\[
t 
= {p \choose q}^T
\begin{pmatrix} S_{11} & S_{21}^T \\ S_{21} & S_{22} \end{pmatrix}
{p \choose q}
= p^TS_{11}p + 2 \, q^TS_{21}p + q^TS_{22}q.
\]
Analyzing each of the three summands separately, we first have
\[
e_1, \alpha \in \K^* \ \ \Longrightarrow \ \
p^TS_{11}p = p^T(e_1\alpha^T + \alpha e_1^T)p = 2(p^Te_1)(\alpha^Tp) \ge 0.
\]
Second, $p \in \K$ and $\text{Rows}(S_{21}) \in \K^*$ imply $S_{21} p
\ge 0$, which in turn implies $q^TS_{21}p = q^T(S_{21}p) \ge 0$ because
$q \ge 0$. Finally, it is clear that $q^TS_{22}q \ge 0$ as $S_{22} \ge
0$ and $q \ge 0$. Thus, $t \ge 0 + 0 + 0 = 0$, as desired.
\end{proof}

The following tightening of ($\overline{\AP}$) simply replaces $\COP(\Q)$
with its inner approximation $\M(\Q)$:
\begin{equation} \tag{$IA$} \label{equ:mcone_ap}
\begin{array}{lll}
v_{\M}^* \ := \ &\min \limits_{x,\lambda, Y} \quad &c^T x + \lambda \\
&\st &x \in \X, \quad \lambda g_1 g_1^T - \tfrac12 G(x) + \tfrac12 H(Y)
\in \M(\Q).
\end{array}
\end{equation}
\nin By construction, $v_{\M}^* \ge v_{\AP}^*$, but in fact these values
are equal.

\begin{theorem} \label{theorem:affine-tractable}
$v_{\M}^* = v_{\AP}^*$.
\end{theorem}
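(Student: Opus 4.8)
The plan is to show two inequalities, $v_{\M}^* \ge v_{\AP}^*$ and $v_{\M}^* \le v_{\AP}^*$, the first of which is essentially free and the second of which is the substance of the theorem. Since $\M(\Q) \subseteq \COP(\Q)$ by the preceding lemma, any feasible point of $(\M)$ is feasible for $(\overline{\AP})$, so $v_{\M}^* \ge v^*_{\AP}$ by Proposition 3. For the reverse inequality, I would start from an optimal (or near-optimal) solution $(x, Y, \lambda)$ of the conic affine formulation $(\ref{equ:conic_ap})$ and build a feasible point of $(\M)$ with the same objective value $c^Tx + \lambda$. The natural candidate is to take the same $x$ and $\lambda$, and to construct $Y$ (possibly a modified version) so that the matrix $\lambda g_1 g_1^T - \tfrac12 G(x) + \tfrac12 H(Y)$ has exactly the block structure demanded by $\M(\Q)$.

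First I would write out the three blocks of $M(x,Y,\lambda) := \lambda g_1 g_1^T - \tfrac12 G(x) + \tfrac12 H(Y)$ explicitly. Using the definitions of $G(x)$, $H(Y)$, and $g_1 = {e_1 \choose 0}$, the $(1,1)$-block (size $k$) works out to $\lambda e_1 e_1^T - \tfrac12(e_1 d^TY + Y^Td e_1^T)$, the $(2,2)$-block (size $m$) is the zero matrix, and the $(2,1)$-block is $\tfrac12(BY - (F - Axe_1^T)) = \tfrac12(Axe_1^T - F + BY)$. Now compare against the requirements of $\M(\Q)$: $S_{22} \ge 0$ holds trivially since the $(2,2)$-block is $0$; $\text{Rows}(S_{21}) \in \K^*$ is (up to the harmless factor $\tfrac12$) precisely the second constraint $\text{Rows}(Axe_1^T - F + BY) \in \K^*$ of $(\ref{equ:conic_ap})$; and the remaining requirement is that the $(1,1)$-block be of the form $e_1\alpha^T + \alpha e_1^T$ for some $\alpha \in \K^*$. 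Writing the $(1,1)$-block as $e_1(\lambda e_1 - \tfrac12 Y^Td)^T + (-\tfrac12 Y^Td)e_1^T$, I see it is already in the rank-$\le 2$ symmetric form $e_1\beta^T + \beta e_1^T$ with $\beta := \lambda e_1 - \tfrac12 Y^Td$; one can then absorb the two pieces into a single $\alpha$, e.g. $\alpha = \lambda e_1 - \tfrac12 Y^Td$ works after checking the symmetrization, possibly with a small adjustment by a multiple of $e_1$ (which lies in $\K^*$ and contributes only to the $e_1 e_1^T$ term). The condition $\alpha \in \K^*$ then amounts exactly to $\lambda e_1 - Y^Td \in \K^*$ — modulo the scaling by $\tfrac12$ and the $e_1$-component, both of which are controlled since $e_1 \in \K^*$ and $\K^*$ is a cone closed under addition — which is the first constraint of $(\ref{equ:conic_ap})$.

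Thus the key step is the algebraic identification: the matrix inequality defining $(\M)$, when its blocks are expanded, decouples into exactly the three conic constraints of $(\ref{equ:conic_ap})$, so $(\M)$ and $(\ref{equ:conic_ap})$ have the same feasible set in the $(x,Y,\lambda)$ variables (up to trivial reparametrization of $\alpha$), hence the same optimal value $v^*_{\AP}$. The main obstacle I anticipate is purely bookkeeping: getting the factors of $\tfrac12$ straight and confirming that the freedom to add any multiple of $e_1$ to $\alpha$ (since $e_1 \in \K^*$) exactly matches the $\lambda$-dependence in the $(1,1)$-block, so that the correspondence between feasible points is genuinely exact in both directions rather than just an inclusion. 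Once the block expansion is written down carefully, the equality $v_{\M}^* = v^*_{\AP}$ follows immediately, and combined with the easy inequality from the lemma it also re-proves, as a byproduct, that $v_{\M}^* = v^*_{\AP}$ with no gap.
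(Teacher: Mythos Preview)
Your proposal is correct and follows essentially the same route as the paper: the easy inequality $v_{\M}^* \ge v_{\AP}^*$ from the cone inclusion, and then the substantive direction by starting from a feasible $(x,Y,\lambda)$ of (\ref{equ:conic_ap}) and verifying block-by-block that $\lambda g_1 g_1^T - \tfrac12 G(x) + \tfrac12 H(Y) \in \M(\Q)$. The one place where you hedge---the choice of $\alpha$ in the $(1,1)$-block---is resolved cleanly by the paper's choice $\alpha := \tfrac12(\lambda e_1 - Y^T d)$, which gives exactly $S_{11} = e_1\alpha^T + \alpha e_1^T$ with no leftover adjustment needed, and $\alpha \in \K^*$ is then equivalent (since $\K^*$ is a cone) to the first constraint $\lambda e_1 - Y^T d \in \K^*$ of (\ref{equ:conic_ap}).
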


\begin{proof}
We show $v_{\M}^* \le v_{\AP}^*$ by demonstrating that every feasible
solution of (\ref{equ:conic_ap}) yields a feasible solution of ($\M$)
with the same objective value. Let $(x,Y,\lambda)$ be feasible for
(\ref{equ:conic_ap}); we prove 
\[
    S := \lambda g_1 g_1^T - \tfrac12 G(x) + \tfrac12 H(Y) \in \M(\Q),
\]
which suffices. Note that the block form of $S$ is
\[
    S = 
    \begin{pmatrix}
        \lambda e_1 e_1^T - \tfrac{1}{2}(e_1d^T Y + Y^T de_1^T) &
        \tfrac{1}{2}(Axe_1^T - F + BY)^T \\
        \tfrac{1}{2}(Axe_1^T - F + BY) & 0
    \end{pmatrix}.
\]
The argument decomposes into three
pieces. First, we define $\alpha := \tfrac{1}{2}(\lambda e_1 - Y^Td)$,
which satisfies $\alpha \in \K^*$ due to (\ref{equ:conic_ap}). Then
\begin{align*}
    S_{11}
    &= \lambda e_1 e_1^T - \tfrac{1}{2}(e_1 d^T Y + Y^T d e_1^T) \\
    &= \left( \tfrac12 \lambda e_1 e_1^T - \tfrac{1}{2} e_1 d^T Y \right) +
       \left( \tfrac12 \lambda e_1 e_1^T - \tfrac{1}{2}Y^T d e_1^T \right) \\
    &= e_1 \alpha^T + \alpha e_1^T
\end{align*}
as desired. Second, we have $2 \, \text{Rows}(S_{21}) =
\text{Rows}(Axe_1^T - F + BY) \in \K^*$ by (\ref{equ:conic_ap}).
Finally, $S_{22} = 0 \ge 0$.
\end{proof}

\section{Improving the Affine Policy} \label{sec:results}


A direct relationship holds between ($\overline{\AR}$) and $(\M)$:

\begin{proposition} \label{pro:tightening}
In problem $(\overline{\AR})$, write $\Lambda = {\Lambda_1 \choose
\Lambda_2}$, where $\Lambda_1 \in \RR^{k \times n_2}$ and $\Lambda_2
\in \RR^{m \times n_2}$. Problem $(\M)$ is a restriction of
(\ref{equ:cop_ar}) in which $\Lambda_2 = 0$, $Y$ is identified with
$\Lambda_1^T$, $\rho = 0$, and $\COP(\Q)$ is tightened to $\M(\Q)$.
\end{proposition}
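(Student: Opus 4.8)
The plan is to verify Proposition~\ref{pro:tightening} by direct substitution: take the feasibility constraint of $(\overline{\AR})$, impose the claimed restrictions $\Lambda_2 = 0$, $\rho = 0$, and $\Lambda_1 = Y^T$, and show that the resulting matrix expression coincides exactly with the matrix expression $\lambda g_1 g_1^T - \tfrac12 G(x) + \tfrac12 H(Y)$ appearing in $(\M)$. Since $\M(\Q) \subseteq \COP(\Q)$ by the earlier lemma, the containment in $\M(\Q)$ is strictly stronger than containment in $\COP(\Q)$, and dropping the term $r\rho$ from the objective (which becomes $0$ when $\rho = 0$) makes the two objective functions identical, so the only real content is the matrix identity.

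The key step is to compute $\tfrac12(E^T\Lambda^T + \Lambda E)$ under the restriction. Recall $E = \begin{pmatrix} -de_1^T & B^T \end{pmatrix}$, so $E^T = \begin{pmatrix} -e_1 d^T \\ B \end{pmatrix}$, and with $\Lambda = {\Lambda_1 \choose \Lambda_2} = {Y^T \choose 0}$, one gets $\Lambda E = {Y^T \choose 0}\begin{pmatrix} -de_1^T & B^T\end{pmatrix} = \begin{pmatrix} -Y^T d e_1^T & Y^T B^T \\ 0 & 0 \end{pmatrix}$, and $E^T \Lambda^T$ is its transpose. Adding and halving yields
\[
\tfrac12(E^T\Lambda^T + \Lambda E) =
\begin{pmatrix}
 -\tfrac12(e_1 d^T Y + Y^T d e_1^T) & \tfrac12 (BY)^T \\
 \tfrac12 BY & 0
\end{pmatrix},
\]
which is precisely $\tfrac12 H(Y)$ by the definition of $H(Y)$ in Section~\ref{sec:tractableaffine}. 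Together with $\rho I$ vanishing when $\rho = 0$, the matrix variable in $(\overline{\AR})$ becomes $\lambda g_1 g_1^T - \tfrac12 G(x) + \tfrac12 H(Y)$, exactly matching $(\M)$.

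I do not anticipate a genuine obstacle here; this is essentially a bookkeeping check that the block structures line up and that the identification $Y \leftrightarrow \Lambda_1^T$ is consistent with the dimensions ($\Lambda_1 \in \RR^{k \times n_2}$ and $Y \in \RR^{n_2 \times k}$, so indeed $Y = \Lambda_1^T$). The only point requiring a word of care is that Proposition~\ref{pro:tightening} is an \emph{asymmetric} comparison: $(\M)$ results from $(\overline{\AR})$ by \emph{both} restricting the dual variables \emph{and} shrinking the cone from $\COP(\Q)$ to $\M(\Q)$, so one should state explicitly that these two operations act in the same direction (both reduce the feasible region), which is why the proposition is phrased as ``$(\M)$ is a restriction of $(\overline{\AR})$'' rather than an equivalence. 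If one wishes, a one-line corollary can then record the consequence $v^*_{\AR} \le v^*_{\M} = v^*_{\AP}$, recovering that the semidefinite-type approximations interpolating between the two are at least as strong as the affine policy.
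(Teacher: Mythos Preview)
Your proposal is correct and follows essentially the same approach as the paper: both arguments reduce the claim to verifying the matrix identity $\tfrac12(E^T\Lambda^T + \Lambda E) = \tfrac12 H(Y)$ under the restrictions $\Lambda_2 = 0$, $\Lambda_1 = Y^T$, and your block computation of $\Lambda E$ matches the paper's (the paper writes out the general $E^T\Lambda^T + \Lambda E$ first and then specializes, whereas you substitute from the start, a cosmetic difference only). Your additional remarks about the objective collapsing when $\rho = 0$ and about the cone shrinkage $\M(\Q) \subseteq \COP(\Q)$ acting in the same restrictive direction are accurate and round out the argument nicely.
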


\begin{proof}
Examining the similar structure of $(\overline{\AR})$ and $(\M)$,
it suffices to equate the terms $E^T \Lambda^T + \Lambda E$ and
$H(Y)$ in the respective problems under the stated restrictions. From
(\ref{equ:zE}),
\[
    E^T \Lambda^T + \Lambda E =
    \begin{pmatrix}
        -e_1 d^T \Lambda_1^T - \Lambda_1 d e_1^T & \Lambda_1 B^T - e_1 d^T \Lambda_2^T \\
        B \Lambda_1^T - \Lambda_2 d e_1^T & B \Lambda_2^T + \Lambda_2 B^T
    \end{pmatrix}.
\]
Setting $\Lambda_2 = 0$ and identifying $Y = \Lambda_1^T$, we see
\[
    E^T \Lambda^T + \Lambda E = 
    \begin{pmatrix}
        -e_1 d^T Y - Y^T d e_1^T & Y^T B^T \\
        B Y & 0
    \end{pmatrix} =
    H(Y),
\]
as desired.
\end{proof}

Now let $\IB(\Q)$ be any closed convex cone satisfying
\[
    \M(\Q) \subseteq
    \IB(\Q) \subseteq
    \COP(\Q),
\]
where the mnemonic ``IB'' stands for ``in between'', and consider the
following problem gotten by replacing $\COP(\Q)$ in
($\overline{\AR}$) with $\IB(\Q)$:
\begin{equation} \tag{$IB$} \label{equ:ccone_ar}
\begin{array}{lll}
v_{\IB}^* \ := \ &\min \limits_{x, \lambda, \Lambda} \quad & c^T x + \lambda \\
&\st & x \in \X, \quad \lambda g_1 g_1^T - \tfrac12 G(x) + \tfrac12(E^T \Lambda^T + \Lambda E) 
\in \IB(\Q).
\end{array}
\end{equation}
Problem $(\IB)$ is clearly a restriction of ($\overline{\AR}$), and
by Proposition \ref{pro:tightening}, it is simultaneously no tighter
than $(\M)$. Combining this with Theorems \ref{thm:equivalence} and
\ref{theorem:affine-tractable}, we thus have:

\begin{theorem} \label{theorem:approximaterlp}
$v_{\AR}^* \le v_{\IB}^* \le v_{\AP}^*$.
\end{theorem}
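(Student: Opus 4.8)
The plan is to chain together the three previously established facts: Theorem~\ref{thm:equivalence}, which says $v^*_{\AR}$ equals the optimal value of $(\overline{\AR})$; Theorem~\ref{theorem:affine-tractable}, which says $v^*_{\M} = v^*_{\AP}$; and Proposition~\ref{pro:tightening}, which exhibits $(\M)$ as a restriction of $(\overline{\AR})$ with $\COP(\Q)$ tightened to $\M(\Q)$. The two inequalities are proved separately, and each follows from a general principle: \emph{tightening a feasible region (in a minimization) can only increase the optimal value}. So the only real content is to verify that the feasible regions are genuinely nested in the two relevant ways.

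For the left inequality $v^*_{\AR} \le v^*_{\IB}$, I would argue that $(\IB)$ is a restriction of $(\overline{\AR})$: every feasible point $(x,\lambda,\Lambda)$ of $(\IB)$ yields a feasible point $(x,\lambda,\Lambda,\rho)$ of $(\overline{\AR})$ with the \emph{same} objective value by taking $\rho = 0$. Indeed, the matrix constraint of $(\IB)$ places $\lambda g_1 g_1^T - \tfrac12 G(x) + \tfrac12(E^T\Lambda^T + \Lambda E)$ in $\IB(\Q) \subseteq \COP(\Q)$, so with $\rho = 0$ the same matrix lies in $\COP(\Q)$ and $\rho \ge 0$ holds trivially; the objective $c^Tx + \lambda$ of $(\IB)$ matches $c^Tx + \lambda + r\rho$ when $\rho = 0$. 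Hence the optimal value of $(\overline{\AR})$ is at most $v^*_{\IB}$, and by Theorem~\ref{thm:equivalence} this optimal value is exactly $v^*_{\AR}$, giving $v^*_{\AR} \le v^*_{\IB}$.

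For the right inequality $v^*_{\IB} \le v^*_{\AP}$, I would use Proposition~\ref{pro:tightening} to embed $(\M)$ into the same template: a feasible point of $(\M)$ corresponds, via $\Lambda_1^T := Y$, $\Lambda_2 := 0$, to a choice of $\Lambda$ for which $\tfrac12(E^T\Lambda^T + \Lambda E) = \tfrac12 H(Y)$, so the matrix appearing in $(\M)$'s constraint coincides with the one in $(\IB)$'s constraint. Since $(\M)$ requires that matrix to lie in $\M(\Q) \subseteq \IB(\Q)$, it is also feasible for $(\IB)$, with identical objective value. Therefore $v^*_{\IB} \le v^*_{\M}$, and combining with Theorem~\ref{theorem:affine-tractable} ($v^*_{\M} = v^*_{\AP}$) yields $v^*_{\IB} \le v^*_{\AP}$.

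The argument is essentially bookkeeping, so I do not anticipate a serious obstacle; the one point requiring care is that the inclusion $\M(\Q) \subseteq \IB(\Q) \subseteq \COP(\Q)$ — which is an \emph{assumption} on $\IB(\Q)$ — is exactly what licenses both nestings, so I would make sure to invoke it explicitly in each direction rather than relying on the specific structure of any particular choice of $\IB(\Q)$. I would also note in passing that $\IB(\Q)$ being a closed convex cone is not actually needed for these inequalities (it matters only for tractability of $(\IB)$), so the proof goes through for any set sandwiched between $\M(\Q)$ and $\COP(\Q)$.
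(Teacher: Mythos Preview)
Your proposal is correct and matches the paper's own argument: the paper simply notes that $(\IB)$ is ``clearly a restriction'' of $(\overline{\AR})$ and, by Proposition~\ref{pro:tightening}, ``simultaneously no tighter than $(\M)$,'' then invokes Theorems~\ref{thm:equivalence} and~\ref{theorem:affine-tractable}. You have spelled out exactly these two nestings (setting $\rho=0$ for the first, and identifying $\Lambda_1^T=Y$, $\Lambda_2=0$ for the second) with the correct appeals to $\M(\Q)\subseteq\IB(\Q)\subseteq\COP(\Q)$.
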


We end this section with a short discussion of example approximations
$\IB(\Q)$ for typical cases of $\K$. In fact, there are complete
hierarchies of approximations of $\COP(\Q)$ \cite{Zuluaga.et.al.2006},
but we present a relatively simple construction that starts from a given
inner approximation $\IB(\K)$ of $\COP(\K)$:

\begin{proposition} 
Suppose $\IB(\K) \subseteq \COP(\K)$, and define
\[
    \IB(\Q) :=
    \left\{
        S + M + R
        \ : \
        \begin{array}{c}
        S \in \M(\Q), \ M \succeq 0 \\
        R_{11} \in \IB(\K), \ R_{21} = 0, \ R_{22} = 0
        \end{array}
    \right\}.
\]
Then $\M(\Q) \subseteq \IB(\Q) \subseteq \COP(\Q)$.
\end{proposition}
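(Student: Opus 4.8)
The plan is to establish the two inclusions $\M(\Q) \subseteq \IB(\Q)$ and $\IB(\Q) \subseteq \COP(\Q)$ separately. The first inclusion is immediate: given $S \in \M(\Q)$, take $M = 0$ and $R = 0$ (noting $0 \in \IB(\K)$ since inner approximations of a cone are cones and hence contain the origin), so $S = S + M + R \in \IB(\Q)$.

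For the second inclusion, I would take an arbitrary element $S + M + R$ with $S \in \M(\Q)$, $M \succeq 0$, and $R$ of the stated block form with $R_{11} \in \IB(\K)$, and show that $x^T(S+M+R)x \ge 0$ for every $x \in \Q = \K \times \RR^m_+$. The argument splits over the three summands. By the Lemma proved earlier in the excerpt, $S \in \M(\Q) \subseteq \COP(\Q)$, so $x^T S x \ge 0$. Since $M \succeq 0$, we have $x^T M x \ge 0$ for all $x$, in particular for $x \in \Q$. For the term $R$, write $x = {p \choose q}$ with $p \in \K$ and $q \in \RR^m_+$; because $R_{21} = 0$ and $R_{22} = 0$, the quadratic form collapses to $x^T R x = p^T R_{11} p$, which is nonnegative because $R_{11} \in \IB(\K) \subseteq \COP(\K)$ and $p \in \K$. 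Summing the three nonnegative contributions gives $x^T(S+M+R)x \ge 0$, so $S + M + R \in \COP(\Q)$. One should also remark that $\IB(\Q)$ is a closed convex cone: it is the Minkowski sum of three closed convex cones (the set $\M(\Q)$, the PSD cone, and the cone of matrices whose only nonzero block $R_{11}$ lies in the closed convex cone $\IB(\K)$), and such a sum is a convex cone; closedness can be invoked from the structure of these particular cones or simply assumed as part of the construction, matching the ``closed convex cone'' hypothesis required for $\IB(\Q)$ in the preceding paragraph.

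The only mild subtlety — hardly an obstacle — is being careful that the block decomposition is consistent across $S$, $M$, $R$ and that the quadratic form genuinely decouples for $R$; the choice $R_{21} = R_{22} = 0$ is exactly what makes $x^T R x$ depend only on $p$, which is the crux of why $\IB(\K) \subseteq \COP(\K)$ suffices to push $R$ into $\COP(\Q)$. Everything else is routine bookkeeping, and the proof is short.
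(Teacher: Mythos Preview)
Your proof is correct and follows essentially the same approach as the paper: both establish the first inclusion by taking $M=0$ and $R=0$ (equivalently $R_{11}=0$), and both verify the second inclusion by splitting the quadratic form ${p \choose q}^T(S+M+R){p \choose q}$ into three nonnegative pieces using $\M(\Q)\subseteq\COP(\Q)$, $M\succeq 0$, and $R_{11}\in\IB(\K)\subseteq\COP(\K)$. Your additional remarks on $\IB(\Q)$ being a closed convex cone go slightly beyond what the paper proves here, but the core argument is identical.
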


\begin{proof}
For the first inclusion, simply take $M = 0$ and $R_{11} = 0$. For the
second inclusion, let arbitrary ${p \choose q} \in \Q$ be given. We need
to show
\[
    {p \choose q}^T
    \left(
        S + M + R
    \right)
    {p \choose q} = \textstyle{{p \choose q}}^T S {p \choose q} +
    \textstyle{{p \choose q}}^T M {p \choose q} + p^T R_{11} p \ge 0.
\]
The first term is nonnegative because $S \in \M(\Q)$; the second
term is nonnegative because $M \succeq 0$; and the third is nonnegative
because $R_{11} \in \COP(\K)$.
\end{proof}

When $\K = \{ u \in \RR^k : \|(u_2, \ldots, u_k)^T\| \le u_1 \}$ is the
second-order cone, it is known \cite{Sturm.Zhang.2003} that
\[
   \COP(\K) = \{ R_{11} = \tau J + M_{11} : \tau \ge 0, \ M_{11} \succeq 0 \},
\]
where $J = \Diag(1, -1, \ldots, -1)$. Because of this simple structure,
it often makes sense to take $\IB(\K) = \COP(\K)$ in practice. Note also
that $M_{11} \succeq 0$ can be absorbed into $M \succeq 0$ in the definition
of $\IB(\Q)$ above. When $\K
= \{ u \in \RR^k: Pu \ge 0 \}$ is a polyhedral cone based on some matrix
$P$, a typical inner approximation of $\COP(\K)$ is
\[
\IB(\K) := \{ R_{11} = P^T N P : N \ge 0 \},
\]
where $N$ is a symmetric matrix variable of appropriate size. This corresponds
to the RLT approach of \cite{Anstreicher.2009, Burer.2015, Sherali.Adams.2013}.

\section{Examples} \label{sec:examples}

In this section, we demonstrate our approximation $v_{\IB}^*$ satisfying
$v^*_{\AR} \le v_{\IB}^* \le v_{\AP}^*$ on several examples from the
literature. The first example is treated analytically, while the remaining
examples are verified numerically. All computations are conducted with
Mosek version 8.0.0.28 beta \cite{mosek} on an Intel Core i3 2.93 GHz Windows
computer with 4GB of RAM and implemented using the modeling language
YALMIP \cite{lofberg2004yalmip} in MATLAB (R2014a).

\subsection{A temporal network example} \label{ssec:example}

The paper \cite{Wiesemann.Kuhn.Rustem.2011} studies a so-called {\em
temporal network\/} application, which for any integer $s \ge 2$ leads
to the problem (\ref{equ:temporal_simple}) below. The uncertainty set is
$\Xi \subseteq \RR^s$; the first-stage decision $x$ is fixed, say, at 0;
and $y(\cdot)$ maps into $\RR^s$:
\begin{equation} \label{equ:temporal_simple}
\begin{array}{llll}
\min \limits_{y(\cdot)} \, & \max \limits_{\xi \in \Xi} \ y(\xi)_s \\
\st & y(\xi)_1 \ge \max\{\xi_1, 1 - \xi_1\}  & \forall \; \xi \in \Xi \\
    & y(\xi)_2 \ge \max\{\xi_2, 1 - \xi_2\} + y(\xi)_1 & \forall \; \xi \in \Xi \\
    &\vdots \\
    & y(\xi)_s \ge \max\{\xi_s, 1 - \xi_s\} + y(\xi)_{s-1} & \forall \; \xi \in \Xi.
\end{array}
\end{equation}
Note that each of the above linear constraints can be expressed
as two separate linear constraints. The authors of \cite{Wiesemann.Kuhn.Rustem.2011}
consider a polyhedral uncertainty set (based on the 1-norm). A related paper
\cite{Hadjiyiannis.Goulart.Kuhn.2011} considers a conic uncertainty set
(based on the 2-norm) for $s=2$; we will extend this to $s \ge 2$. In
particular, we consider the following two uncertainty sets for general
$s$:
\begin{align*}
    \Xi_1 &:= \{ \xi \in \RR^{s} : \|\xi - \tfrac12 \mathbbm{1}_s \|_1 \le \tfrac12 \}, \\
    \Xi_2 &:= \{ \xi \in \RR^{s} : \|\xi - \tfrac12 \mathbbm{1}_s \| \le \tfrac12 \},
\end{align*}
where $\mathbbm{1}_s$ denotes the all-ones vector in $\RR^s$. For $j =
1, 2$, let $v_{\AR, j}^*$ and $v^*_{\AP, j}$ be the robust and affine
values associated with (\ref{equ:temporal_simple}) for the uncertainty
set $\Xi_j$. Note that $\Xi_1 \subseteq \Xi_2$, and hence $v_{\AR, 1}^*
\le v_{\AR, 2}^*$. The papers \cite{Hadjiyiannis.Goulart.Kuhn.2011,
Wiesemann.Kuhn.Rustem.2011} show that $v_{\AP,1}^* = v_{\AP,2}^* =
s$, and \cite{Wiesemann.Kuhn.Rustem.2011} establishes $v_{\AR,1}^*
= \tfrac12(s + 1)$. Moreover, we prove the following result in the
Appendix:

\begin{lemma} \label{lemma:calculateVar2}
$v_{\AR, 2}^* = \tfrac12(\sqrt{s} + s)$. 
\end{lemma}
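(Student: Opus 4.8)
The plan is to eliminate the second-stage minimization in (\ref{equ:temporal_simple}) in closed form and then reduce the remaining worst-case computation to a trivial norm maximization over a Euclidean ball. For a fixed realization $\xi \in \Xi_2$, the innermost problem minimizes $y(\xi)_s$ subject to the chain $y(\xi)_1 \ge \max\{\xi_1, 1-\xi_1\}$ and $y(\xi)_i \ge \max\{\xi_i, 1-\xi_i\} + y(\xi)_{i-1}$ for $i = 2, \ldots, s$. A one-line induction on $i$ shows that every feasible $y(\xi)$ satisfies $y(\xi)_i \ge \sum_{j=1}^i \max\{\xi_j, 1-\xi_j\}$, and that equality throughout is itself feasible; hence the optimal inner value is $\sum_{j=1}^s \max\{\xi_j, 1-\xi_j\}$. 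Since $y(\cdot)$ in the ARO model ranges over all mappings $\Xi_2 \to \RR^s$, this optimal response may be taken pointwise in $\xi$, so that
\[
    v_{\AR, 2}^* = \max_{\xi \in \Xi_2} \; \sum_{j=1}^s \max\{\xi_j, 1-\xi_j\}.
\]

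Next I would rewrite each coordinate term using the identity $\max\{t, 1-t\} = \tfrac12 + |t - \tfrac12|$, which gives
\[
    v_{\AR, 2}^* = \tfrac{s}{2} + \max_{\xi \in \Xi_2} \; \big\| \xi - \tfrac12 \mathbbm{1}_s \big\|_1 .
\]
Substituting $\eta := \xi - \tfrac12 \mathbbm{1}_s$ and using the definition of $\Xi_2$, the remaining subproblem is $\max\{ \|\eta\|_1 : \|\eta\| \le \tfrac12 \}$. The elementary bound $\|\eta\|_1 \le \sqrt{s}\,\|\eta\|_2$ (equivalently, Cauchy--Schwarz applied to $\mathbbm{1}_s$ and the vector of absolute values of $\eta$) yields $\|\eta\|_1 \le \tfrac12 \sqrt{s}$, with equality attained at $\eta = \tfrac{1}{2\sqrt{s}} \mathbbm{1}_s$, which lies on the boundary of the ball. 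Combining the displays gives $v_{\AR, 2}^* = \tfrac{s}{2} + \tfrac{\sqrt{s}}{2} = \tfrac12(\sqrt{s} + s)$.

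The computation is routine; the only places warranting an explicit sentence are in the first step, where one must note that the coordinatewise-optimal recourse is admissible in the ARO formulation because $y(\cdot)$ is an unrestricted mapping, and that the outer maximum over the compact set $\Xi_2$ is attained so the equalities above are literal. I do not anticipate any genuine obstacle.
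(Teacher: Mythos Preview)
Your argument is correct and follows essentially the same route as the paper's own proof: collapse the chain of inequalities to $y(\xi)_s \ge \sum_j \max\{\xi_j,1-\xi_j\}$ with equality attainable, then reduce the worst-case computation to maximizing the $1$-norm over a Euclidean ball via a centering substitution. The only cosmetic differences are that the paper scales with $\mu = 2\xi - \mathbbm{1}_s$ rather than your $\eta = \xi - \tfrac12\mathbbm{1}_s$, and it separates the lower and upper bounds on $v_{\AR,2}^*$ whereas you argue equality directly.
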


\noindent Overall, we see that each $j = 1,2$ yields a class of
problems with arbitrarily large gaps between the true robust adjustable
and affine-policy values.

Using the change of variables
\[
    u := (1, u_2, \ldots, u_{s+1})^T = (1, 2 \xi_1 - 1, \ldots, 2 \xi_s - 1)^T \in \RR^{s+1},
\]
for each $\Xi_j$, we may cast (\ref{equ:temporal_simple}) in the form
of ($\AR$) by setting $x = 0$, defining
\[
    m = 2s, \quad k = s + 1, \quad n_2 = s,
\]
and taking $\K_j$ to be the $k$-dimensional cone associated with the
$j$-norm. For convenience, we continue to use $s$
in the following discussion, but we will remind the reader of the
relationships between $s$, $m$, $k$, and $n_2$ as necessary (e.g., $s =
m/2$). We also set
\[
    d = (0,\ldots,0,1)^T \in \RR^s,
\]
\[
B = \begin{pmatrix}
       \phantom{-}1 & \phantom{-}0 & 0 & \cdots & \phantom{-}0 & 0 \\
       \phantom{-}1 & \phantom{-}0 & 0 & \cdots & \phantom{-}0 & 0 \\
      -1 & \phantom{-}1 & 0 & \cdots & \phantom{-}0 & 0 \\
      -1 & \phantom{-}1 & 0 & \cdots & \phantom{-}0 & 0 \\
       \phantom{-}0 & -1 & 1 & \cdots & \phantom{-}0 & 0 \\
       \phantom{-}0 & -1 & 1 & \cdots & \phantom{-}0 & 0 \\
\phantom{-}\vdots & \vdots & \vdots & \ddots & \phantom{-}\vdots & \vdots \\
       \phantom{-}0 & \phantom{-}0 & 0 & \cdots & \phantom{-}1 & 0 \\
       \phantom{-}0 & \phantom{-}0 & 0 & \cdots & \phantom{-}1 & 0 \\
       \phantom{-}0 & \phantom{-}0 & 0 & \cdots & -1 & 1 \\
       \phantom{-}0 & \phantom{-}0 & 0 & \cdots & -1 & 1 
\end{pmatrix} \in \RR^{2s \times s}, \ \ \
F = \frac12 \begin{pmatrix}
1 &  \phantom{-}1 &  \phantom{-}0 & \cdots &  \phantom{-}0 \\
1 & -1 &  \phantom{-}0 & \cdots &  \phantom{-}0 \\
1 &  \phantom{-}0 &  \phantom{-}1 & \cdots &  \phantom{-}0 \\
1 &  \phantom{-}0 & -1 & \cdots &  \phantom{-}0 \\
\vdots &  \phantom{-}\vdots &  \phantom{-}\vdots & \ddots &  \phantom{-}\vdots \\
1 &  \phantom{-}0 &  \phantom{-}0 & \cdots &  \phantom{-}1 \\
1 &  \phantom{-}0 &  \phantom{-}0 & \cdots & -1 
\end{pmatrix} \in \RR^{2s \times (s+1)}.
\]
Furthermore,
\[
    \K_2 := \{ u \in \RR^{s+1} : \| (u_2,\ldots,u_{s+1})^T \| \le u_1 \}
\]
is the second-order cone, and
\[
    \K_1 := \{ u \in \RR^{s+1} : P u \ge 0 \},
\]
where each row of $P \in \RR^{2^{s} \times (s+1)}$ has the following form:
$(1, \pm 1, \ldots, \pm 1)$. That is, each row is an $(s+1)$-length
vector with a 1 in its first position and some combination of $+1$'s
and $-1$'s in the remaining $s$ positions. Note that the size of $P$ is
exponential in $s$. Using extra nonnegative variables, we could also
represent $\K_1$ as the projection of a cone with size
polynomial in $s$, and all of the subsequent discussion would
still apply. In other words, the exact representation of $\K_1$ is not
so relevant to our discussion here; we choose the representation
$Pu \ge 0$ in the original space of variables for convenience.

It is important to note that, besides $\K_1$ and $\K_2$, all
other data required for representing (\ref{equ:temporal_simple}) in the form of
($\AR$), such as the matrices $B$ and $F$, do not depend on
$j$. Assumptions \ref{ass:tractable}--\ref{ass:boundedval} clearly hold,
and the following proposition shows that (\ref{equ:temporal_simple})
also satisfies Assumption \ref{ass:rcr}:

\begin{proposition}
For (\ref{equ:temporal_simple}) and its formulation as an instance of
(\ref{equ:ar}), $\W$ is nonempty and bounded.
\end{proposition}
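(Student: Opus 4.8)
The plan is to verify the two claims about $\W := \{ w \ge 0 : B^T w = d \}$ directly from the explicit data $B$, $d$ given for the temporal network instance. Recall $d = (0,\ldots,0,1)^T \in \RR^s$ and $B \in \RR^{2s \times s}$ consists of pairs of identical rows: rows $2i-1$ and $2i$ both equal the $i$-th row of the ``difference'' pattern (a $1$ in column $i$, a $-1$ in column $i{-}1$ when $i \ge 2$). Write $w = (w_1,\ldots,w_{2s})^T$ and set $a_i := w_{2i-1} + w_{2i} \ge 0$ for $i = 1,\ldots,s$; since the two rows in each pair coincide, the equation $B^T w = d$ depends on $w$ only through $(a_1,\ldots,a_s)$ and reads, column by column, $a_i - a_{i+1} = 0$ for $i = 1,\ldots,s-1$ and $a_s = 1$. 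Hence $B^T w = d$ is equivalent to $a_1 = a_2 = \cdots = a_s = 1$.

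From this reduction, nonemptiness is immediate: take, e.g., $w_{2i-1} = 1$, $w_{2i} = 0$ for all $i$, which gives $a_i = 1$ for every $i$ and $w \ge 0$; this exhibits a point of $\W$ (consistent with the Introduction's general observation that $\W \ne \emptyset$ under Assumptions \ref{ass:feas}--\ref{ass:boundedval}). For boundedness, observe that any $w \in \W$ satisfies $w \ge 0$ and $w_{2i-1} + w_{2i} = a_i = 1$ for each $i$, so $0 \le w_j \le 1$ for every coordinate $j$; thus $\W$ is contained in the unit cube $[0,1]^{2s}$ and is therefore bounded (indeed $\|w\|^2 \le 2s$). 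Together with closedness (it is defined by closed conditions), $\W$ is a compact polytope.

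The only mild subtlety is bookkeeping: one must make sure the pairing structure of $B$'s rows is stated precisely and that the column-by-column reading of $B^T w = d$ is carried out correctly, including the boundary column $i = 1$ (no $-1$ entry) and the last column $i = s$ (which is where $d$ is nonzero). This is entirely routine given the displayed form of $B$, so I do not expect a genuine obstacle; the proof is a short direct computation. I would present it in two sentences: first the reduction to $a_i = 1$, then the conclusions $\W \ne \emptyset$ and $\W \subseteq [0,1]^{2s}$. One could alternatively phrase boundedness via the recession cone $\{ w \ge 0 : B^T w = 0 \}$, showing it is $\{0\}$ by the same $a_i$-argument, but bounding $\W$ inside the cube is the most transparent route.
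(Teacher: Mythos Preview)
Your proof is correct and follows essentially the same approach as the paper: reduce $B^T w = d$ to the conditions $w_{2i-1} + w_{2i} = 1$ for $i = 1,\ldots,s$, then read off nonemptiness (an explicit point) and boundedness ($\W \subseteq [0,1]^{2s}$) directly. In fact your derivation is more accurate than the paper's printed proof, which lists the system as the $2s{-}1$ equations $w_j + w_{j+1} = 1$ for all consecutive $j$---that appears to be a slip, since $B^T w = d$ has only $s$ rows and (as your $a_i$ computation and the paper's own null-space calculation in the appendix confirm) only couples the pairs $(w_{2i-1},w_{2i})$.
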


\begin{proof}
The system $B^Tw = d$ is equivalent to the $2s -1$ equations $w_1 +
w_2 = 1$, $w_2 + w_3 = 1, \cdots, w_{2s-1} + w_{2s} = 1$. It is thus
straightforward to check that $\W$ is nonempty and bounded.
\end{proof}

\subsubsection{The case $j=2$}

Let us focus on the case $j=2$; we continue to make use of the subscript
$2$. Recall $v^*_{\AR,2} = \tfrac12 (\sqrt{s} + s)$, and consider
problem ($\IB_2$) with $\IB(\K_2 \times \RR_+^{2s})$ built as described
for the second-order cone at the end of Section \ref{sec:results}.
We employ the equivalent formulation (\ref{equ:cop_ar'}) of
($\overline{\AR}$), setting $x = 0$ and replacing $\COP(\K_2
\times \RR_+^{2s})$ by $\IB(\K_2 \times \RR_+^{2s})$:
\begin{equation} \label{equ:IB2}
\begin{array}{lll}
v_{\IB,2}^* = & \min
& \lambda + r\rho\\
& \st & \lambda g_1g_1^T
- \tfrac12 G(0) + E^T\Diag(v)E + \rho I
\in \IB(\K_2 \times \RR_+^{2s}) \\ 
& & \rho \ge 0.
\end{array}
\end{equation}
Note that the dimension of $g_1$ is $k + m = (s+1) + 2s = 3s + 1$.

Substituting the definition of $\IB(\K_2
\times \RR_+^{2s})$ from Section \ref{sec:results}, using the fact that
$\K_2^* = \K_2$, and simplifying, we have
\begin{equation} \label{equ:temporal1}
\begin{array}{lll}
v_{\IB,2}^* = & \min 
& \lambda + r\rho\\
& \st &
\rho I + 
\lambda g_1g_1^T 
- \tfrac12 G(0) 
+ E^T\Diag(v)E
- S - R\succeq 0  \\
& & \rho \ge 0, \ S_{11} = e_1\alpha^T + \alpha e_1^T, \ \alpha \in \K_2, \ S_{22} \ge 0, \ \text{\rm Rows}(S_{21}) \in \K_2 \\
& & R_{11} = \tau J, \ \tau \ge 0, \ R_{21} = 0, \ R_{22} = 0.
\end{array}
\end{equation}


\begin{proposition} \label{prop:vib2Equalsvar2}
For any $\rho > 0$, (\ref{equ:temporal1})
has a feasible solution with objective value $v_{\AR,2}^* + r
\rho$. 
\end{proposition}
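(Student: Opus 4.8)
The plan is to exhibit, for the given $\rho > 0$, explicit values of the remaining variables $v,\alpha,S_{21},S_{22},\tau$ in (\ref{equ:temporal1}) that are feasible together with $\lambda := v_{\AR,2}^* = \tfrac12(\sqrt s + s)$; since the objective of (\ref{equ:temporal1}) is $\lambda + r\rho$, this at once yields the claimed value. Writing $N(v) := \rho I + \lambda g_1 g_1^T - \tfrac12 G(0) + E^T\Diag(v)E$, the task is equivalently to choose $v \in \RR^s$ with $N(v) \in \IB(\K_2 \times \RR_+^{2s})$, i.e.\ to decompose $N(v) = S + M + R$ with $S \in \M(\K_2 \times \RR_+^{2s})$, $M \succeq 0$, and $R$ supported on the $(1,1)$-block as $R_{11} = \tau J$, $\tau \ge 0$.

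The construction is driven by an optimal solution of the underlying bilinear program. By the definition of $\p(\cdot)$ and (\ref{equ:Q(x)}), with $x = 0$ one has $v_{\AR,2}^* = \p(0) = \max\{(Fu)^T w : u \in \U,\ w \in \W\}$; using $d = e_s$ and the decoupling of $\W$ into the constraints $w_{2i-1} + w_{2i} = 1$ (established above), a direct check shows this maximum is attained at $u^* = (1,\tfrac{1}{\sqrt s},\ldots,\tfrac{1}{\sqrt s})^T \in \partial\K_2$ and $w^* = (1,0,1,0,\ldots,1,0)^T$, consistently with Lemma~\ref{lemma:calculateVar2}. Setting $z^* := (u^*,w^*)$, I would use the rank-one matrix $z^*(z^*)^T$ as a guide for choosing the dual multipliers: exploiting that $\K_2$ is self-dual and that $\COP(\K_2) = \{\tau J + (\text{PSD})\}$, one selects $S$ (through $\alpha \in \K_2$, rows of $S_{21}$ in $\K_2$, and $S_{22} = 0$) and the scalar $\tau \ge 0$ to cancel the non-positive-semidefinite part of $N(v)$, leaving a remainder whose positive semidefiniteness is verified by a Schur-complement argument with respect to its $(2,2)$-block, which equals $\rho I + \sum_j v_j b_j b_j^T$ with $b_j$ the columns of $B$. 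The special block structure of $B$ and $F$ lets one arrange the off-diagonal contributions of $N(v) - S$ to lie in the range of $\sum_j v_j b_j b_j^T$, so the Schur complement reduces to a $k \times k$ matrix, affine in the data and in $v,\alpha,\tau,\rho$, whose positive semidefiniteness is then checked by inspection.

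The step I expect to be the real obstacle is this final calibration --- making the $k \times k$ Schur complement positive semidefinite while $\lambda$ is pinned exactly at $v_{\AR,2}^*$ --- because the dual of (\ref{equ:temporal1}) is \emph{not} attained at $\rho = 0$. Indeed, a feasible point with $\rho = 0$ and $\lambda = v_{\AR,2}^*$ would be optimal, and complementary slackness against the primal optimum $z^*(z^*)^T$ would force $S_{21} u^* + S_{22} w^* = -\tfrac12 F u^*$; but the left-hand side is nonnegative, since rows of $S_{21}$ in $\K_2$ together with $\|(u^*_2,\ldots,u^*_{s+1})\| = 1$ give $S_{21}u^* \ge 0$ while $S_{22}\ge 0$ and $w^* \ge 0$, whereas $-\tfrac12 F u^* < 0$. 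Hence the entries of $v$ (and of $\tau$) must be allowed to grow as $\rho \downarrow 0$, and one has to pin down their rate of growth so that $N(v)$ remains in $\IB(\K_2 \times \RR_+^{2s})$ for every $\rho > 0$. Once such a choice is verified, feasibility with objective $v_{\AR,2}^* + r\rho$ follows; letting $\rho \downarrow 0$ then gives $v_{\IB,2}^* \le v_{\AR,2}^*$, which with Theorem~\ref{theorem:approximaterlp} yields $v_{\IB,2}^* = v_{\AR,2}^*$.
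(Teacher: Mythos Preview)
Your proposal frames the task correctly---fix $\lambda=\tfrac12(\sqrt s+s)$ and produce feasible $v,\alpha,S_{21},S_{22},\tau$---but it stops exactly where the work begins: you name the ``real obstacle'' (verifying the $k\times k$ Schur complement is PSD) and argue convincingly that it is delicate, yet you never carry out the calibration. Without an explicit choice of the variables and a verification, there is no proof.

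Two further points of contrast with what the paper actually does. First, the paper makes the opposite allocation of freedom from yours: it sets $\alpha=0$, $S_{21}=0$, $\tau=\tfrac14\sqrt s$ (a \emph{constant}, not growing as $\rho\downarrow 0$), and takes the single nontrivial piece to be
\[
S_{22}=\frac{1}{2\sqrt s}\sum_{i=1}^{s}\bigl(f_{2i}f_{2i-1}^T+f_{2i-1}f_{2i}^T\bigr)\ge 0,
\]
while forcing $v=\mu\mathbbm{1}$ for a single scalar $\mu$. The residual matrix $V$ (everything except $\rho I$ and $\mu E^TE$) is then shown, by an explicit sum-of-squares identity on $\Null(E)$, to be positive semidefinite there, after which a short null-space lemma delivers the required $\mu$. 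Your choice $S_{22}=0$ would break this line: with the paper's other choices, the quadratic form $z^TVz$ on $\Null(E)$ becomes $\tfrac14\bigl(\sqrt s\,u^Tu+2\sum_i u_{i+1}(2w_{2i}-u_1)\bigr)$, which is unbounded below (take $u_1=0$, $u_{i+1}=-1$, $w_{2i}\to\infty$). Whether a Schur-complement route with $S_{22}=0$ but nonzero $\alpha,S_{21}$ can be completed is not clear from your sketch; you would have to write the Schur complement out and check it.

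Second, your complementary-slackness observation that the infimum at $\rho=0$ is not attained is correct and illuminating, but your inference that both $v$ \emph{and} $\tau$ must blow up is not: in the paper's construction only the scalar $\mu$ diverges as $\rho\downarrow 0$, while $\tau,\alpha,S_{21},S_{22}$ are fixed. The paper's key idea is precisely to isolate all the $\rho$-dependence in $\mu E^TE$ and reduce the problem to a null-space positivity statement about a fixed matrix.
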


\begin{proof}
See the Appendix.
\end{proof}

\begin{theorem}
$v_{\IB,2}^* = v_{\AR,2}^*$
\end{theorem}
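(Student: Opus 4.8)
The plan is to prove the two inequalities $v_{\IB,2}^* \le v_{\AR,2}^*$ and $v_{\IB,2}^* \ge v_{\AR,2}^*$ separately, and in fact the second is immediate. By Theorem~\ref{theorem:approximaterlp}, every approximation of the form $(\IB)$ satisfies $v_{\AR}^* \le v_{\IB}^*$; applied to the present instance this gives $v_{\AR,2}^* \le v_{\IB,2}^*$ directly, so no work is needed there. The entire content of the theorem is therefore the reverse inequality $v_{\IB,2}^* \le v_{\AR,2}^*$, and this is exactly what Proposition~\ref{prop:vib2Equalsvar2} is designed to deliver.

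The argument for $v_{\IB,2}^* \le v_{\AR,2}^*$ runs as follows. Proposition~\ref{prop:vib2Equalsvar2} furnishes, for every $\rho > 0$, a feasible point of~(\ref{equ:temporal1}) whose objective value is $v_{\AR,2}^* + r\rho$. Since~(\ref{equ:temporal1}) is the minimization defining $v_{\IB,2}^*$, this yields $v_{\IB,2}^* \le v_{\AR,2}^* + r\rho$ for all $\rho > 0$. The constant $r$ is a fixed finite positive number (it does not depend on $\rho$), so letting $\rho \downarrow 0$ gives $v_{\IB,2}^* \le v_{\AR,2}^*$. Combining with the reverse inequality from Theorem~\ref{theorem:approximaterlp} yields $v_{\IB,2}^* = v_{\AR,2}^*$.

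The only subtlety worth flagging is that Proposition~\ref{prop:vib2Equalsvar2} provides feasible points only for $\rho$ strictly positive, not for $\rho = 0$ itself, which is why the proof must pass to the limit rather than simply exhibiting one optimal solution. This is harmless: $v_{\IB,2}^*$ is defined as an infimum, so an upper bound of $v_{\AR,2}^* + r\rho$ valid for all $\rho > 0$ is enough. (One could alternatively note that the feasible region of~(\ref{equ:temporal1}) is closed and the objective is continuous, so the limiting solution is itself feasible with $\rho=0$, but the limiting-infimum argument is cleaner and avoids having to argue boundedness/attainment.) The genuine mathematical work—constructing the explicit feasible triple $(\lambda, v, \rho, S, R)$ achieving objective $v_{\AR,2}^* + r\rho$, which requires choosing $\lambda = v_{\AR,2}^* = \tfrac12(\sqrt{s}+s)$ together with a compatible $\alpha \in \K_2$, $\tau \ge 0$, diagonal $v$, and nonnegative $S_{22}$, and then verifying the positive-semidefiniteness of the residual matrix—is deferred to the Appendix in Proposition~\ref{prop:vib2Equalsvar2}, so at this point in the exposition the theorem is a one-line consequence. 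Thus the main obstacle is not in this proof at all but in the supporting Appendix computation; the proof of the theorem itself is essentially bookkeeping with the limit in $\rho$.

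\begin{proof}
By Theorem~\ref{theorem:approximaterlp} applied to this instance, $v_{\AR,2}^* \le v_{\IB,2}^*$. For the reverse inequality, Proposition~\ref{prop:vib2Equalsvar2} shows that for every $\rho > 0$ problem~(\ref{equ:temporal1}) has a feasible solution with objective value $v_{\AR,2}^* + r\rho$; since $v_{\IB,2}^*$ is the optimal (minimum) value of~(\ref{equ:temporal1}), this gives $v_{\IB,2}^* \le v_{\AR,2}^* + r\rho$. As $r$ is a fixed finite constant, letting $\rho \downarrow 0$ yields $v_{\IB,2}^* \le v_{\AR,2}^*$. Hence $v_{\IB,2}^* = v_{\AR,2}^*$.
\end{proof}
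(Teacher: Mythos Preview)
Your proof is correct and matches the paper's own proof essentially line for line: both invoke Theorem~\ref{theorem:approximaterlp} for $v_{\AR,2}^* \le v_{\IB,2}^*$, then use Proposition~\ref{prop:vib2Equalsvar2} to obtain $v_{\IB,2}^* \le v_{\AR,2}^* + r\rho$ for every $\rho>0$ and pass to the limit $\rho \to 0$.
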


\begin{proof}
We know $v^*_{\AR,2} \le v^*_{\IB,2}$ by Theorem \ref{theorem:approximaterlp}. 
Moreover we have $v_{\IB,2}^* \le v_{\AR,2}^* + r \rho$ for any $\rho >0$
by Proposition \ref{prop:vib2Equalsvar2}. Thus, by letting $\rho \to 0$, we have 
$v_{\IB,2}^* \le v_{\AR,2}^*$, which completes the proof. 
\end{proof}

For completeness---and also to facilitate Section \ref{ssec:example2}
next---we construct the corresponding optimal solution of the dual of
(\ref{equ:IB2}), which can be derived from (\ref{equ:qxcppstandard})
by setting $x = 0$, adding the redundant constraint $I
\bullet Z \le r$, and replacing $\CP(\K_2 \times \RR_+^{2s})$ by its
relaxation $\IB(\K_2 \times \RR_+^{2s})^*$, the dual cone of $\IB(\K_2
\times \RR_+^{2s})$. Specifically, the dual is
\begin{equation} \label{equ:temporal1_dual}
\begin{array}{lll}
v_{\IB,2}^* = & \max & F \bullet Z_{21} \\
& \st & \diag(E Z E^T) = 0, \ I \bullet Z \le r  \\
& & J \bullet Z_{11} \ge 0, \ Z_{11}e_1 \in \K_2, \ Z_{22} \ge 0, \
     \text{\rm Rows}(Z_{21}) \in \K_2 \\
& & Z \succeq 0, \ g_1g_1^T \bullet Z = 1.
\end{array}
\end{equation}
In particular, we construct the optimal solution of (\ref{equ:temporal1_dual}) 
in the following proposition:
\begin{proposition} \label{prop:dualoptimal}
Define
\[
Z =
\frac14 \left[
{2e_1 \choose \mathbbm{1}_m} {2e_1 \choose \mathbbm{1}_m}^T
+
\sum_{i=1}^{s}
{\tfrac{2}{\sqrt{s}} e_{i+1} \choose f_{2i-1} - f_{2i} }
{\tfrac{2}{\sqrt{s}} e_{i+1} \choose f_{2i-1} - f_{2i} }^T
\right],
\]
where each $e_{\bullet}$ is a canonical basis vector in $\RR^k = \RR^{s+1}$,
each $f_{\bullet}$ is a canonical basis vector in $\RR^m = \RR^{2s}$, and
$\mathbbm{1}_m \in \RR^m$ is the all-ones vector. Then, $Z$
is the optimal solution of (\ref{equ:temporal1_dual}). 
\end{proposition}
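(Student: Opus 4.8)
## Proof Proposal for Proposition \ref{prop:dualoptimal}

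The plan is to verify that the proposed $Z$ is feasible for (\ref{equ:temporal1_dual}) and achieves the objective value $v_{\AR,2}^* = \tfrac12(\sqrt{s}+s)$, which by weak duality (and the already-established equality $v_{\IB,2}^* = v_{\AR,2}^*$) forces optimality. First I would observe that $Z$ is presented as a sum of $s+1$ rank-one terms $z^i (z^i)^T$ with each $z^i \in \K_2 \times \RR^{2s}_+$: the first term uses $z^0 = \tfrac12\binom{2e_1}{\mathbbm 1_m}$, whose $\K_2$-component is $e_1$ (clearly in the second-order cone), and the remaining terms use $z^i = \tfrac12\binom{(2/\sqrt s)\,e_{i+1}}{f_{2i-1}-f_{2i}}$, whose $\K_2$-component $(1/\sqrt s)e_{i+1}$ has first coordinate $0$ and Euclidean norm $0$ on the cone-defining coordinates only if we are careful — in fact $e_{i+1}$ has zero first coordinate, so $(1/\sqrt s)e_{i+1} \in \K_2$ iff $0 \ge \|(1/\sqrt s)e_{i+1}\|$, which fails. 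So instead I should treat $Z$ simply as an explicit PSD matrix and check each conic constraint directly; the rank-one decomposition is a convenient bookkeeping device for computing the blocks, not a certificate of membership in $\CP$.

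Concretely, I would compute the three blocks. With $z^i = \binom{p^i}{q^i}$ we get $Z_{11} = \sum_i p^i(p^i)^T = \tfrac14\big(2e_1(2e_1)^T + \sum_{i=1}^s \tfrac{4}{s} e_{i+1}e_{i+1}^T\big) = e_1 e_1^T + \tfrac1s \sum_{i=1}^s e_{i+1}e_{i+1}^T$; then $J \bullet Z_{11} = 1 - \tfrac1s \cdot s = 0 \ge 0$, and $Z_{11}e_1 = e_1 \in \K_2$. The block $Z_{22} = \sum_i q^i(q^i)^T = \tfrac14\big(\mathbbm 1_m \mathbbm 1_m^T + \sum_{i=1}^s (f_{2i-1}-f_{2i})(f_{2i-1}-f_{2i})^T\big)$, which is manifestly entrywise nonnegative since $\mathbbm 1_m \mathbbm 1_m^T \ge 0$ dominates the off-diagonal $-1$'s coming from the second sum (each pair $(2i-1,2i)$ gets $1 - 1 = 0$ off-diagonal and $1+1=2$ on-diagonal). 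The cross block $Z_{21} = \sum_i q^i(p^i)^T = \tfrac14\big(2\,\mathbbm 1_m e_1^T + \sum_{i=1}^s \tfrac{2}{\sqrt s}(f_{2i-1}-f_{2i})e_{i+1}^T\big)$; its rows are, up to scaling, $e_1 \pm \tfrac{1}{\sqrt s}e_{i+1}$, each of which lies in $\K_2$ since $\|\tfrac{1}{\sqrt s}e_{i+1}\| = \tfrac{1}{\sqrt s} \le 1$. Positive semidefiniteness of $Z$ is immediate from the rank-one sum representation. The normalization $g_1 g_1^T \bullet Z = (e_1^T u)^2$-type term equals $(Z_{11})_{11} = 1$. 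For the equality constraints $\diag(EZE^T) = 0$: recall $E = (-de_1^T \mid B^T)$, so $Ez^i = -d(e_1^Tp^i) + B^T q^i$; I must check this vanishes for each $i$. For $z^0$: $e_1^T p^0 = 1$ and $B^T \mathbbm 1_m = $ (twice) the vector with entries from summing paired rows of $B$, which equals $d$ by the structure of $B$ noted in the feasibility proposition — actually $B^T(\tfrac12\mathbbm 1_m)$ telescopes to $d$, so $Ez^0 = -d + d = 0$. For $z^i$, $i\ge 1$: $e_1^T p^i = 0$, so $Ez^i = B^T q^i = B^T(f_{2i-1}-f_{2i})$, and since rows $2i-1$ and $2i$ of $B$ are identical, this difference is $0$. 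Hence $EZE^T = \sum_i (Ez^i)(Ez^i)^T = 0$, so its diagonal vanishes. The constraint $I \bullet Z \le r$ holds for $r$ chosen large enough (as in the paper's construction) since $I\bullet Z$ is a fixed finite number.

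Finally the objective: $F \bullet Z_{21}$. Using $Z_{21} = \tfrac12\mathbbm 1_m e_1^T + \tfrac{1}{2\sqrt s}\sum_i (f_{2i-1}-f_{2i})e_{i+1}^T$ and the explicit $F$ (whose rows $2i-1, 2i$ are $\tfrac12(e_1 + e_{i+1})^T$ and $\tfrac12(e_1 - e_{i+1})^T$ in $\RR^{s+1}$), I compute $F \bullet Z_{21} = \sum_{\text{rows }\ell} F_{\ell\cdot}\,(Z_{21})_{\ell\cdot}^T$. The $\tfrac12\mathbbm 1_m e_1^T$ part contributes $\tfrac12 \sum_\ell (F_{\ell\cdot})_1 = \tfrac12 \cdot (2s)\cdot \tfrac12 = \tfrac s2$; the $e_{i+1}$-part contributes $\tfrac{1}{2\sqrt s}\sum_{i=1}^s\big[(F_{2i-1,\cdot})_{i+1} - (F_{2i,\cdot})_{i+1}\big] = \tfrac{1}{2\sqrt s}\sum_{i=1}^s (\tfrac12 - (-\tfrac12)) = \tfrac{1}{2\sqrt s}\cdot s = \tfrac{\sqrt s}{2}$. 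Summing gives $\tfrac12(s + \sqrt s) = v_{\AR,2}^*$. Combined with feasibility and $v_{\IB,2}^* = v_{\AR,2}^*$, this shows $Z$ attains the optimum, so it is an optimal solution. The main obstacle I anticipate is purely computational bookkeeping — carefully tracking the $\tfrac14$ scaling factors, the indexing offset between $e_\bullet \in \RR^{s+1}$ and $f_\bullet \in \RR^{2s}$, and confirming the telescoping identities $B^T \mathbbm 1_m = 2d$ and the row-pair equalities of $B$ and $F$ — rather than any conceptual difficulty; once the blocks are written out, every conic condition is checked by inspection.
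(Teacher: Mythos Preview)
Your proposal is correct and follows essentially the same approach as the paper: compute the blocks $Z_{11}$, $Z_{21}$, $Z_{22}$ explicitly, verify each conic and linear constraint of (\ref{equ:temporal1_dual}) by inspection (including $\diag(EZE^T)=0$ via $Ez^i=0$ for each rank-one generator), and then confirm the objective value $F\bullet Z_{21}=\tfrac12(\sqrt s+s)=v_{\AR,2}^*=v_{\IB,2}^*$ to conclude optimality. The brief initial detour about $\CP$-membership is harmless since you correctly abandon it and proceed exactly as the paper does.
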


\begin{proof}
See the Appendix. 
\end{proof}

\subsubsection{The case $j=1$} \label{ssec:example2}

Recall that $\Xi_1$ is properly contained in $\Xi_2$. So $v_{\AR,1}^*$
cannot exceed $v_{\AR,2}^*$ due to its smaller uncertainty set. In
fact, as discussed above, we have
$\tfrac12(\sqrt{s} + 1) = v_{\AR,1}^* < v^*_{\AR,2} = \tfrac12(\sqrt{s}
+ s)$ and $v^*_{\AP,1} = v^*_{\AP,2} = s$. In this subsection, we
further exploit the inclusion $\Xi_1 \subseteq \Xi_2$ and the results
of the previous subsection (case $j=2$) to prove that, for the
particular tightening $\IB(\K_1 \times \RR_+^{2s})$ proposed at the end
of Section \ref{sec:results}, we have $v_{\AR,1}^* < v_{\IB,1}^* =
\tfrac12(\sqrt{s} + s) < v^*_{\AP,1}$. In other words, the case $j=1$
provides an example in which our approach improves the affine value but
does not completely close the gap with the robust value.
Our main result of this case is given in the following proposition. 

\begin{proposition} \label{prop:case1}
$v_{\IB,1}^* = v_{\IB,2}^* = \tfrac12(\sqrt{s} + s)$.
\end{proposition}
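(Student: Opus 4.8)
The plan is to prove $v_{\IB,1}^* = v_{\IB,2}^* = \tfrac12(\sqrt{s}+s)$ by establishing the two inequalities $v_{\IB,1}^* \le v_{\IB,2}^*$ and $v_{\IB,1}^* \ge v_{\IB,2}^*$ separately, exploiting the inclusion $\Xi_1 \subseteq \Xi_2$, equivalently $\K_1 \subseteq \K_2$. The key structural observation is that all data of the two instances coincide except for the cone, so the only difference between problems $(\IB_1)$ and $(\IB_2)$ is that the former uses $\IB(\K_1 \times \RR_+^{2s})$ and the latter uses $\IB(\K_2 \times \RR_+^{2s})$. Since $\K_1 \subseteq \K_2$, we have $\COP(\K_2) \subseteq \COP(\K_1)$ and $\K_2^* = \K_2 \subseteq \K_1^*$; tracing this through the definitions of $\M(\cdot)$ and the ``in between'' cone $\IB(\cdot)$ built at the end of Section \ref{sec:results}, one gets $\IB(\K_2 \times \RR_+^{2s}) \subseteq \IB(\K_1 \times \RR_+^{2s})$. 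A larger conic feasible region means a smaller (or equal) optimal value for the minimization problem $(\IB)$, giving $v_{\IB,1}^* \le v_{\IB,2}^* = \tfrac12(\sqrt{s}+s)$ immediately.

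For the reverse inequality $v_{\IB,1}^* \ge \tfrac12(\sqrt{s}+s)$, the plan is to use the dual side. By the same reasoning applied to the dual problem (\ref{equ:temporal1_dual}), the feasible region for the case $j=1$ is a restriction of that for $j=2$: replacing the constraints ``$Z_{11} e_1 \in \K_2$, $\text{Rows}(Z_{21}) \in \K_2$'' by their $\K_1$-counterparts ``$Z_{11}e_1 \in \K_1^*$, $\text{Rows}(Z_{21}) \in \K_1^*$'' is a tightening because $\K_2 = \K_2^* \subseteq \K_1^*$ (wait---one must be careful here: actually $\K_1 \subseteq \K_2$ gives $\K_2^* \subseteq \K_1^*$, so $\K_2$-membership is the stronger condition, meaning the $j=2$ dual is the restriction, not the $j=1$ dual). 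This means the direct cone-inclusion argument alone does not settle the reverse inequality, so a different device is needed: I would show directly that the specific matrix $Z$ constructed in Proposition \ref{prop:dualoptimal} is in fact feasible for the $j=1$ version of (\ref{equ:temporal1_dual}), i.e., also satisfies the $\K_1^*$-membership constraints. Since that $Z$ achieves objective value $\tfrac12(\sqrt{s}+s)$ and the $j=1$ dual is a valid lower-bounding problem for $v_{\IB,1}^*$ (by weak duality, or because (\ref{equ:temporal1_dual}) with the $\K_1^*$ constraints equals $v_{\IB,1}^*$ exactly by the same derivation as for $j=2$), this yields $v_{\IB,1}^* \ge \tfrac12(\sqrt{s}+s)$.

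To verify that $Z$ from Proposition \ref{prop:dualoptimal} is $j=1$-dual-feasible, I would check the two membership conditions. The rows of $Z_{21}$ and the vector $Z_{11}e_1$ must lie in $\K_1^* = \cone\{(1,\pm1,\ldots,\pm1)^T\}^*$; concretely, a vector $v \in \RR^{s+1}$ lies in $\K_1^*$ iff $v = P^T \mu$ for some $\mu \ge 0$, equivalently (since $\K_1 = \{Pu \ge 0\}$) iff $v^T u \ge 0$ for all $u$ with $Pu \ge 0$. From the explicit rank-one-plus-sum form of $Z$, one computes $Z_{11}e_1 = \tfrac14(2 e_1)\cdot 2 + \tfrac14 \sum_i \tfrac{2}{\sqrt s} e_{i+1} \cdot \tfrac{2}{\sqrt s}\langle e_{i+1}, e_1\rangle = e_1$ (the cross terms vanish), and $e_1 \in \K_1^*$ since the first coordinate of every element of $\K_1$ is nonnegative. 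Similarly each row of $Z_{21}$ is a nonnegative combination of $\tfrac{1}{\sqrt s} e_{i+1}$ and $\tfrac12 e_1$ scaled appropriately, all of which are easily seen to be in $\K_1^*$ (each $e_{i+1} \in \K_1^*$ because $\langle e_{i+1}, u\rangle$ for $u = $ a convex generator of $\K_1$... actually $e_{i+1} \notin \K_1^*$ in general, so the precise computation of $\text{Rows}(Z_{21})$ matters). The main obstacle is precisely this last verification: confirming that the particular $Z_{21}$ blocks---each of which is a scalar multiple of $e_1 \in \RR^{s+1}$ plus a multiple of a single $e_{i+1}$---actually land in $\K_1^* = P^T\RR^{2^s}_+$, which requires writing them as explicit nonnegative combinations of the rows of $P$. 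Since each row of $P$ has the form $(1,\pm1,\ldots,\pm1)$, averaging all $2^{s}$ rows gives $2^{s} e_1$, and averaging the $2^{s-1}$ rows with a fixed sign pattern in coordinate $i+1$ isolates a vector of the form $e_1 + c\, e_{i+1}$; suitable nonnegative combinations of these recover exactly the required blocks, which is the computation I would carry out in detail, after which the equality $v_{\IB,1}^* = v_{\IB,2}^*$ follows by sandwiching.
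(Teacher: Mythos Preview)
Your overall strategy matches the paper's: use the cone inclusion $\IB(\K_2 \times \RR_+^{2s}) \subseteq \IB(\K_1 \times \RR_+^{2s})$ to get $v_{\IB,1}^* \le v_{\IB,2}^*$, and then verify that the explicit $Z$ from Proposition \ref{prop:dualoptimal} is feasible for the $j=1$ dual to get $v_{\IB,1}^* \ge \tfrac12(\sqrt{s}+s)$. However, there are two concrete gaps in the second step.

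First, you have the dual membership constraints backwards. In the primal cone $\M(\Q)$ the conditions are $\alpha \in \K^*$ and $\text{Rows}(S_{21}) \in \K^*$; dualizing, the constraints on $Z$ become $Z_{11}e_1 \in \K^{**} = \K$ and $\text{Rows}(Z_{21}) \in \K$, \emph{not} $\K^*$. For $j=2$ this distinction is invisible because $\K_2$ is self-dual, but for $j=1$ it matters: the correct conditions are $PZ_{11}e_1 \ge 0$ and $PZ_{21}^T \ge 0$ (membership in $\K_1$), which is a direct entrywise check, not the laborious task of expressing each vector as $P^T\mu$ with $\mu \ge 0$. It so happens that the specific vectors at hand lie in both $\K_1$ and $\K_1^*$, so your computation would ultimately succeed, but the framing is wrong and the work you propose is unnecessary.

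Second, and more substantively, you omit one constraint entirely. The $j=2$ dual carries $J \bullet Z_{11} \ge 0$, which is dual to the piece $R_{11} = \tau J$, $\tau \ge 0$. For $j=1$ the analogous primal piece is $R_{11} \in \IB(\K_1) = \{P^T N P : N \ge 0\}$, whose dual constraint is $P Z_{11} P^T \ge 0$ entrywise. You never mention this, and it does require verification: with $Z_{11} = \Diag(1, \tfrac1s, \ldots, \tfrac1s)$, each entry of $PZ_{11}P^T$ has the form $1 + \tfrac1s\,\alpha^T\beta$ for sign vectors $\alpha, \beta \in \{\pm 1\}^s$, hence is at least $1 - 1 = 0$. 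The paper checks exactly these three conditions ($PZ_{11}e_1 \ge 0$, $PZ_{11}P^T \ge 0$, $PZ_{21}^T \ge 0$); without the middle one your feasibility argument is incomplete.
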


\begin{proof}
See the Appendix. 
\end{proof}

\subsection{Multi-item newsvendor problem} \label{ssec:newsvendor}

In this example, we consider the same robust multi-item newsvendor
problem discussed in \cite{Ardestani-Jaafari.Delage.2016}:
\begin{equation} \label{equ:vendor}
\max \limits_{x \ge 0} \min \limits_{\xi \in \Xi} \sum_{j \in {\cal J}}
\big[
    r_j \min(x_j, \xi_j) -c_j x_j + s_j \max (x_j - \xi_j, 0) - p_j \max (\xi_j - x_j, 0)
\big],
\end{equation}
where ${\cal J}$ represents the set of products; $x$ is the vector of
nonnegative order quantities $x_j$ for all $j \in {\cal J}$; $\xi \in
\Xi$ is the vector of uncertain demands $\xi_j$ for all $j \in {\cal
J}$; $r_j, c_j, s_j$, and $p_j$ denote the sale price, order cost,
salvage price, and shortage cost of a unit of product $j$ with $s_j \le
\min(r_j, c_j)$. Problem (\ref{equ:vendor}) is equivalent to
\begin{equation} \label{equ:two-stage-vendor}
\begin{array}{lll}
\max \limits_{x, y(\cdot)} & \min \limits_{\xi \in \Xi} \sum_{j \in {\cal J}} y_j(\xi) & \\
\st & y_j(\xi) \le (r_j - c_j) x_j - (r_j - s_j)(x_j - \xi_j) & \forall \, j \in {\cal J}, \, \xi \in \Xi \\  
     & y_j(\xi) \le (r_j - c_j)x_j - p_j(\xi_j - x_j) &  \forall \, j \in {\cal J}, \, \xi \in \Xi \\
     & x \ge 0. 
\end{array}
\end{equation}
We consider the same instance as in \cite{Ardestani-Jaafari.Delage.2016}
with ${\cal J} = \{1,2,3\}$, 
\[
r = (80, 80, 80), \ c = (70, 50, 20), \ s = (20, 15, 10), \ p =(60, 60, 50),
\]
and
\[
\Xi := \left\{ \xi : \exists \, \zeta^+, \ \zeta^- \st
\begin{array}{l} 
\zeta^+ \ge 0, \ \zeta^- \ge 0 \\
 \zeta^+_j + \zeta^-_j \le 1  \ \forall \, j \in {\cal J} \\ 
 \sum_{j \in {\cal J}} (\zeta^+_j + \zeta^-_j) = 2 \\
 \xi_1 = 80 + 30(\zeta_1^+ + \zeta_2^+ - \zeta_1^- - \zeta_2^-) \\
 \xi_2 = 80 + 30(\zeta_2^+ + \zeta_3^+ - \zeta_2^- - \zeta_3^-) \\
 \xi_3 = 60 + 20(\zeta_3^+ + \zeta_1^+ - \zeta_3^- - \zeta_1^-) 
  \end{array} \right\}.
\]

Omitting the details, we reformulate problem
(\ref{equ:two-stage-vendor}) as an instance of (\ref{equ:ar}) in
minimization form. Assumption \ref{ass:tractable} clearly holds, and by
using a method called {\em enumeration of robust linear constraints}
in \cite{Gorissen.Hertog.2013}, we have $v_{\AR}^* = -825.83$ (so
Assumption \ref{ass:boundedval} holds). Moreover, the affine-policy
value is $v_{\AP}^*=-41.83$, and thus Assumption \ref{ass:feas} holds.
As mentioned at the end of Section \ref{sec:copositive}, whether or
not Assumption \ref{ass:rcr} holds, in practice our approach still
provides an upper bound. Indeed, we solve $(\IB)$ with the approximating
cone $\IB(\Q)$ defined in Section \ref{sec:results}, where $\K$ is a
polyhedral cone, and obtain $v_{\IB}^* = -411.08$, which closes the gap
significantly. The first-stage decisions given by the affine policy and
our approach, respectively, are
\[
\begin{array}{c}
x_{\AP}^* \approx (52.083, 104.400, 80.000), \ \ \ x_{\IB}^* \approx (57.118, 78.162, 77.473).
\end{array}
\]
For the same instance, the paper \cite{Ardestani-Jaafari.Delage.2016}
reports the same upper bound. Indeed, it appears that the
specification of our cone $\IB(\Q)$ corresponds directly to the
classes of valid inequalities that they include in their approach
\cite{Ardestani-Jaafari.Delage.2017}, but we have not proved this
formally.


\subsection{Lot-sizing problem on a network} \label{ssec:network}

We next consider a network lot-sizing problem derived from section 5 of
\cite{Bertsimas.Ruiter.2016} for which the mathematical formulation is:
\[
\begin{array}{lll}
\min \limits_{x, y(\cdot)} & c^Tx + \max \limits_{\xi \in \Xi}
    \sum_{i=1}^N \sum_{j=1}^N t_{ij}y(\xi)_{ij} & \\
\st & x_i + \sum_{j=1}^N y(\xi)_{ji} - \sum_{j=1}^Ny(\xi)_{ij} \ge \xi_i &
        \forall \ \xi \in \Xi, \ i=1, \ldots, N \\
    & y(\xi)_{ij} \ge 0 & \forall \ \xi \in \Xi, \ i, j = 1, \ldots, N \\
    & 0 \le x_i \le V_i & \forall \ i = 1, \ldots, N,
\end{array}
\]
where $N$ is the number of locations in the network, $x$ denotes the
first-stage stock allocations, $y(\xi)_{ij}$ denotes the second-stage
shipping amounts from location $i$ to location $j$, and the uncertainty
set is the ball $\Xi := \{\xi : \|\xi\| \le \Gamma \}$ for a given
radius $\Gamma$. (The paper \cite{Bertsimas.Ruiter.2016} uses a
polyhedral uncertainty set, which we will also discuss below.) The
vector $c$ consists of the first-stage costs, the $t_{ij}$ are the
second-stage transportation costs for all location pairs, and $V_i$
represents the capacity of store location $i$. We refer the reader to
\cite{Bertsimas.Ruiter.2016} for a full description.

Consistent with \cite{Bertsimas.Ruiter.2016}, we consider an instance
with $N=8$, $\Gamma = 10\sqrt{N}$, each $V_i=20$, and each $c_i = 20$.
We randomly generate the positions of the $N$ locations from $[0,
10]^2$ in the plane. Then we set $t_{ij}$ to be the (rounded) Euclidean
distances between all pairs of locations; see Table \ref{Tab:cost}. 

\begin{table}[tbp]
\centering
\begin{tabular}{c|cccccccc}  
& \multicolumn{8}{c}{Location $i$} \\
Location $j$ & 1 & 2 & 3 & 4 & 5 & 6 & 7 & 8   \\ \hline
1 & 0 & 4 & 3 & 2 & 2 & 2 & 3 & 5  \\ 
2 & 4 & 0 & 6 & 5 & 4 & 4 & 2 & 8  \\
3 & 3 & 6 & 0 & 1 & 5 & 2 & 6 & 2  \\ 
4 & 2 & 5 & 1 & 0 & 4 & 1 & 4 & 3 \\
5 & 2 & 4 & 5 & 4 & 0 & 4 & 2 & 7 \\ 
6 & 2 & 4 & 2 & 1 & 4 & 0 & 4 & 4  \\ 
7 & 3 & 2 & 6 & 4 & 2 & 4 & 0 & 7  \\ 
8 & 5 & 8 & 2 & 3 & 7 & 4 & 7 & 0 
\end{tabular}
\caption{Unit transportation costs $t_{ij}$ associated with pairs of locations}
\label{Tab:cost}
\end{table}

Omitting the details, we reformulate this problem as an instance of
(\ref{equ:ar}), and we calculate $v^*_{\text{LB}} = 1573.8$ (using
the Monte Carlo sampling procedure mentioned in the Introduction)
and $v_{\AP}^*=1950.8$. It is also easy to see that Assumption
\ref{ass:tractable} holds, and the existence of an affine policy
implies that Assumption \ref{ass:feas} holds. Moreover, Assumption
\ref{ass:boundedval} holds because the original objective value above is
clearly bounded below by 0. Again, as mentioned at the end of Section
\ref{sec:copositive}, whether or not Assumption \ref{ass:rcr} holds,
in practice we can still use our approach to calculate bounds. We solve
(\ref{equ:ccone_ar}) with the approximating cone $\IB(\Q)$ defined in
Section \ref{sec:results}, where $\K$ is the second-order cone, and
obtain $v_{\IB}^* = 1794.0$, which closes the gap significantly. The
first-stage allocations given by the affine policy and our approach,
respectively, are
\[
\begin{array}{c}
x_{\AP}^* \approx (9.097, 11.246, \, 9.516, 8.320, 10.384, 9.493, 10.211, 12.316), \\
x_{\IB}^* \approx (0.269, 16.447, 15.328, 0.091, 18.124, 0.375,  \,9.951, 19.934).
\end{array}
\]

Letting other data remain the same, we also ran tests on a budget
uncertainty set $\Xi := \{\xi : 0 \le \xi \le \hat \xi e, \, e^T\xi \le
\Gamma \}$, where $\hat \xi = 20$ and $\Gamma = 20\sqrt{N}$, which is
consistent with \cite{Bertsimas.Ruiter.2016}. We found that, in this
case, our method did not perform better than the affine policy.

\subsection{Randomly generated instances} \label{ssec:random_example}

Finally, we used the same method presented in
\cite{Hadjiyiannis.Goulart.Kuhn.2011} to generate random instances of
(\ref{equ:ar}) with $(k,m,n_1,n_2) = (17, 16, 3, 5)$, $\X = \RR^{n_1}$,
$\U$ equal to the unit ball, and $\K$ equal to the second-order cone.
Specifically, the instances are generated as follows: (i) the elements
of $A$ and $B$ are independently and uniformly sampled in $[-5,5]$; (ii)
the rows of $F$ are uniformly sampled in $[-5, 5]$ such that each row is
in $-\K^* = -\K$ guaranteeing $Fu \le 0$ for all $u \in \U$; and (iii) a
random vector $\mu \in \RR^m$ is repeatedly generated according to the
uniform distribution on $[0,1]^m$ until $c := A^T\mu \ge 0$ and $d :=
B^T\mu \ge 0$. Note that, by definition, $\mu \in \W$.

Clearly Assumption \ref{ass:tractable} is satisfied. In addition, we can
see that Assumption \ref{ass:feas} is true as follows. Consider $x = 0$
and set $y(\cdot)$ to be the zero map, i.e., $y(u) = 0$ for all $u \in
\U$. Then $Ax + B y(u) \ge Fu$ for all $u$ if and only $0 \ge Fu$ for
all $u$, which has been guaranteed by construction. Finally, Assumption
\ref{ass:boundedval} holds due to the following chain, where $\p(x)$ is
defined as at the beginning of Section \ref{sec:copositive}:
\begin{align*}
    c^T x + \p(x)
    &= c^T x + \max \limits_{u \in \U} \max \limits_{w \in \W} (Fu - Ax)^T w \\
    &\ge c^T x + \max \limits_{u \in \U} (Fu - Ax)^T \mu 
    = c^T x - (Ax)^T \mu + \max \limits_{u \in \U} (Fu)^T \mu \\
    &= (c - A^T \mu)^T x + \max \limits_{u \in \U} (Fu)^T \mu 
    = 0^T x + \max \limits_{u \in \U} (Fu)^T \mu \\
    &> -\infty.
\end{align*}
We do not know if Assumption \ref{ass:rcr} necessarily holds
for this construction, but as mentioned at the end of Section
\ref{sec:copositive}, our approximations still hold even if Assumption
\ref{ass:rcr} does not hold.

For 1,000 generated instances, we computed $v_{\AP}^*$, the lower bound
$v^*_{\text{LB}}$ from the sampling procedure of the Introduction, and
our bound $v_{\IB}^*$ using the the approximating cone $\IB(\Q)$ defined
in Section \ref{sec:results}, where $\K$ is the second-order cone. Of
all 1,000 instances, 971 have $v^*_{\text{LB}} < v_{\IB}^* = v_{\AP}^*$,
while the remaining 29 have $v^*_{\text{LB}} < v_{\IB}^* < v_{\AP}^*$.
For those 29 instances with a positive gap, the average relative gap
closed is 20.2\%, where
\[
\text{relative gap closed} :=
\frac{v_{\AP}^* - v_{\IB}^*}{v_{\AP}^* - v_{\LB}^*} \times 100\%.
\]

\subsection{Computational details}

Table \ref{Tab:statistics} illustrates some computational details of 
the three numerical examples in Sections \ref{ssec:newsvendor}--\ref{ssec:random_example}. 
The statistics on the sizes of the
conic programs are reported by Mosek. We list the number of scalar 
variables ({\em scalars\/}), the number of second-order cones
({\em cones\/}), the number of positive semidefinite matrices along 
with their size ({\em matrices (size)\/}), and the number of linear 
constraints ({\em constraints\/}) in Table \ref{Tab:statistics}. We also
report the computation time in the last column. Note that all the $1,000$
instances in Section \ref{ssec:random_example} have the same problem
size and the computation time is the average of all the instances. 

\begin{table}[tbp]
\centering
\begin{tabular}{lrrrrrr}
Example & scalars & cones & matrices (size) & constraints & time (sec) \\ \hline
Sec.~\ref{ssec:newsvendor} & 603 & 0 & 1 ($13 \times 13$) & 448 & 0.23  \\
Sec.~\ref{ssec:network} & 14861 & 65 & 1 ($73 \times 73$) & 12210 & 35.00 \\
Sec.~\ref{ssec:random_example} & 2741 & 17 & 1 ($33 \times 33$) & 1870 &  0.73
\end{tabular}
\caption{Illustration of the sizes of problems and the computation times for the examples as reported by Mosek}
\label{Tab:statistics}
\end{table}

\section{Future Directions} \label{sec:future}

In this paper, we have provided a new perspective on the two-stage problem
$(\AR)$. It would be interesting to study tighter inner approximations
$\IB(\Q)$ of $\COP(\Q)$ or to pursue other classes of problems, such
as the one described in Section \ref{ssec:example}, for which our
approach allows one to establish the tractability of $(\AR)$. A
significant open question for our approach---one which we have not been
able to resolve---is whether the copositive approach corresponds to
enforcing a particular class of policies $y(\cdot)$. For example, the
paper \cite{Bertsimas.Iancu.Parrilo.2011} solves $(\AR)$ by employing
polynomial policies, but the form of our ``copositive policies'' is
unclear even though we have proven they are rich enough to solve
$(\AR)$. A related question is how to extract a specific policy
$y(\cdot)$ from the solution of the approximation $(\IB)$.

\section*{Acknowledgments}

The authors would like to thank Qihang Lin for many helpful discussions
regarding the affine policy at the beginning of the project and Erick
Delage and Amir Ardestani-Jaafari for thoughtful discussions, for
relaying the specific parameters of the instance presented in Section
\ref{ssec:newsvendor}, and for pointing out an error in one of our
codes.

\end{onehalfspace}

\bibliographystyle{plain}

\bibliography{arlp}

\begin{onehalfspace}

\section{Appendix}

\subsection{Proof of Lemma \ref{lemma:calculateVar2}}

\begin{proof}
Any feasible $y(\xi)$ satisfies
\begin{align*}
    y(\xi)_s
    &\ge \max\{\xi_s, 1 - \xi_s\} + y(\xi)_{s-1} \\
    &\ge \max\{\xi_s, 1 - \xi_s\} + \max\{\xi_{s-1}, 1 - \xi_{s-1}\} + y(\xi)_{s-2} \\
    &\ge \cdots \ge \sum_{i=1}^s \max\{\xi_i, 1 - \xi_i\}
\end{align*}
Hence, applying this inequality at an optimal $y(\cdot)$, it follows
that
\[
v_{\AR,2}^*
\ge \max \limits_{\xi \in \Xi_2} \sum_{i=1}^s \max\{\xi_i, 1 - \xi_i\}.
\]
Under the change of variables $\mu := 2 \xi - \mathbbm{1}_s$, we have 
\begin{align*}
v_{\AR,2}^*
&\ge
\max \limits_{\xi \in \Xi_2} \sum_{i=1}^s \max\{\xi_i, 1 - \xi_i\}
=
\max \limits_{\|\mu\| \le 1} \sum_{i=1}^s \tfrac12 \max\{1 + \mu_i, 1 - \mu_i\} \\
&=
\frac12 \max \limits_{\|\mu\| \le 1} \sum_{i=1}^s (1 + |\mu_i|)
=
\frac12 \left(s + \max \limits_{\|\mu\| \le 1} \|\mu\|_1 \right)
=
\tfrac12 (\sqrt{s} + s),
\end{align*}
where the last equality follows from the fact that the largest 1-norm
over the Euclidean unit ball is $\sqrt{s}$. Moreover, one can check that
the specific, sequentially defined mapping 
\begin{align*}  
y(\xi)_1 &:= \max\{\xi_1, 1 - \xi_1\} \\
y(\xi)_2 &:= \max\{\xi_2, 1 - \xi_2\} + y(\xi)_1 \\
&\vdots \\
y(\xi)_s &:= \max\{\xi_s, 1 - \xi_s\} + y(\xi)_{s-1}
\end{align*}
is feasible with objective value $\tfrac12(\sqrt{s} + s)$. So $v_{\AR,
2}^* \le \tfrac12(\sqrt{s} + s)$, and this completes the argument that
$v_{\AR, 2}^* = \tfrac12(\sqrt{s} + s)$.
\end{proof}

\subsection{Proof of Proposition \ref{prop:vib2Equalsvar2}}

The proof of Proposition \ref{prop:vib2Equalsvar2} requires
the following lemma. 

\begin{lemma} \label{lemma:existmu}
If a symmetric matrix $V$ is positive semidefinite on the null space of
the rectangular matrix $E$ (that is, $z \in \Null(E) \Rightarrow z^T V
z \ge 0$), then there exists $\mu > 0$ such that $\rho I + V + \mu E^TE
\succ 0$.
\end{lemma}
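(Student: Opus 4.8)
The plan is to argue by contradiction using a compactness argument on the unit sphere. Suppose no such $\mu > 0$ exists. Then for every positive integer $n$, taking $\mu = n$, the matrix $\rho I + V + n E^T E$ fails to be positive definite, so there exists a unit vector $z_n$ (i.e., $z_n^T z_n = 1$) with $z_n^T (\rho I + V + n E^T E) z_n \le 0$, that is,
\[
    z_n^T (\rho I + V) z_n + n \, \|E z_n\|^2 \le 0.
\]
Since $\rho I + V$ is a fixed symmetric matrix, the quantity $z_n^T(\rho I + V) z_n$ is bounded below by $\lambda_{\min}(\rho I + V)$ over all unit vectors $z_n$. Hence $n \, \|E z_n\|^2 \le -\lambda_{\min}(\rho I + V)$, which forces $\|E z_n\|^2 \to 0$ as $n \to \infty$.

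Next I would invoke compactness of the unit sphere: passing to a subsequence, $z_n \to z^*$ with $\|z^*\| = 1$. By continuity, $\|E z^*\| = \lim_n \|E z_n\| = 0$, so $z^* \in \Null(E)$. Also by continuity, $z_n^T(\rho I + V) z_n \to (z^*)^T(\rho I + V) z^* = \rho + (z^*)^T V z^*$. Taking the limit in the displayed inequality and using that $n \|E z_n\|^2 \ge 0$, we obtain $\rho + (z^*)^T V z^* \le 0$. But $z^* \in \Null(E)$ together with the hypothesis that $V$ is positive semidefinite on $\Null(E)$ gives $(z^*)^T V z^* \ge 0$, so $\rho + (z^*)^T V z^* \ge \rho > 0$, a contradiction.

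The argument is essentially routine; the only point requiring a little care is the bookkeeping of the subsequence and making sure the hypothesis is applied to the limit point $z^*$ rather than the $z_n$ (which need not lie in $\Null(E)$). I expect no serious obstacle. An alternative, more quantitative route would decompose $\RR^{k+m}$ as $\Null(E) \oplus \Null(E)^\perp$, write $z = z_0 + z_1$ accordingly, bound the cross terms $z_0^T V z_1$ and the term $z_1^T V z_1$ using Cauchy--Schwarz, and note that $\|E z_1\|$ is bounded below by $\sigma_{\min}\|z_1\|$ where $\sigma_{\min}$ is the smallest nonzero singular value of $E$; choosing $\mu$ large enough that $\mu \sigma_{\min}^2$ dominates the relevant constants then yields positive definiteness directly. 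I would present the compactness argument as the main proof since it is shorter, but mention the quantitative version gives an explicit $\mu$.
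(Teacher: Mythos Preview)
Your proof is correct and follows essentially the same contrapositive/compactness argument as the paper: assume no $\mu$ works, extract unit vectors $z_n$ witnessing non-positive-definiteness for $\mu=n$, pass to a convergent subsequence whose limit lies in $\Null(E)$, and derive a contradiction from $\rho>0$. The paper's version obtains $E\bar z=0$ by dividing the quadratic inequality by $\ell$ rather than via your $\lambda_{\min}$ bound, but this is a cosmetic difference; your additional remark about the quantitative $\Null(E)\oplus\Null(E)^\perp$ decomposition is a nice bonus not present in the paper.
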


\begin{proof}
We prove the contrapositive. Suppose $\rho I + V + \mu E^TE$ is not
positive definite for all $\mu > 0$. In particular, there exists a
sequence of vectors $\{ z_\ell \}$ such that
\[
z_\ell^T(\rho I + V + \ell E^TE) z_\ell \le 0, \ \ \|z_\ell\| = 1.
\]
Since $\{ z_\ell \}$ is bounded, there exists a limit point $\bar z$ such
that
\[
z_\ell^T(\tfrac{1}{\ell} (\rho I + V) + E^TE)z_\ell \le 0
\ \ \Rightarrow \ \
\bar z^T E^T E \bar z = \| E z \|^2 \le 0
\ \ \Leftrightarrow \ \
\bar z \in \Null(E).
\]
Furthermore, 
\begin{align*}
z_\ell^T(\rho I + V) z_\ell \le -\ell z_\ell^TE^TEz_\ell = -\ell\|E z_\ell \|^2 \le 0
\ \ &\Rightarrow \ \
\bar z^T (\rho I + V) \bar z \le 0 \\
\ \ &\Leftrightarrow \ \
\bar z^T V \bar z \le -\rho \| \bar z\|^2 < 0.
\end{align*}
Thus, $V$ is not positive semidefinite on $\Null(E)$. 
\end{proof}

\begin{proof}[Proof of Proposition \ref{prop:vib2Equalsvar2}]
For fixed $\rho > 0$, let us construct the claimed feasible solution.
Set
\[
    \lambda = v^*_{\AR,2} = \tfrac12(\sqrt{s} + s), \quad
    \alpha = 0, \quad
    \tau = \tfrac14 \sqrt{s}, \quad
    S_{21} = 0,
\]
and
\[
    S_{22} = \frac{1}{2\sqrt{s}} \sum_{i=1}^{s}
    \left( f_{2i}f_{2i-1}^T + f_{2i-1}f_{2i}^T \right) \ge 0,
\]
where $f_{j}$ denotes the $j$-th standard basis vector in $\RR^m =
\RR^{2s}$. Note that clearly $\alpha \in \K_2$ and $\text{\rm Rows}(S_{21}) \in \K_2$. Also
forcing $v = \mu \mathbbm{1}_k$ for a single scalar variable $\mu$, where
$\mathbbm{1}_k$ is the all-ones vector of size $k = s+1$, the feasibility
constraints of (\ref{equ:temporal1}) simplify further to
\begin{equation} \label{equ:temporal1_simple}
\rho I +
\begin{pmatrix}
    \tfrac12 (s + \sqrt{s}) e_1e_1^T - \tfrac14 \sqrt{s} J &
    -\tfrac{1}{2}F^T \\
    -\tfrac{1}{2}F &
    -S_{22}
\end{pmatrix} 
+
\mu E^TE \succeq 0,
\end{equation}
where $e_1 \in \RR^k = \RR^{s+1}$ is the first standard basis vector. For
compactness, we write
\begin{equation} \label{equ:defnV}
    V := 
    \begin{pmatrix}
        \tfrac12 (s + \sqrt{s}) e_1e_1^T - \tfrac14 \sqrt{s} J &
        -\tfrac{1}{2}F^T \\
        -\tfrac{1}{2}F &
        -S_{22}
    \end{pmatrix}
\end{equation}
so that (\ref{equ:temporal1_simple}) reads $\rho I + V + \mu E^TE
\succeq 0$.

We next show that the matrix $V$ is positive semidefinite 
on $\Null(E)$. Recall that $E \in \RR^{n_2 \times (k+m)} = \RR^{s \times (3s+1)}$. For
notational convenience, we partition any $z \in \RR^{k+m}$ into $z = {u
\choose w}$ with $u \in \RR^k = \RR^{s+1}$ and $w \in \RR^m = \RR^{2s}$.
Then, from the definition of $E$, we have
\begin{align*}
z = {u \choose w} \in \Null(E)
\ \ &\Longleftrightarrow \ \
\left\{
    \begin{array}{l}
        w_1 + w_2 = w_3 + w_4 \\
        w_3 + w_4 = w_5 + w_6 \\
        \vdots \\
        w_{2s - 3} + w_{2s - 2} = w_{2s - 1} + w_{2s} \\
        w_{2s - 1} + w_{2s} = u_1
    \end{array}
\right\} \\
&\Longleftrightarrow \ \
w_{2i - 1} = u_1 - w_{2i} \ \ \forall \ i = 1, \ldots, s.
\end{align*}
So, taking into account the definition (\ref{equ:defnV}) of $V$,
\[
4 \, z^T Vz 
= 4 \, \textstyle{{u \choose w}}^T V {u \choose w}
= u^T \left( 2(s + \sqrt{s}) e_1e_1^T - \sqrt{s} J \right) u - 4 \, w^T F u - 4 \, w^T S_{22} w,
\]
which breaks into the three summands, and we will simplify each one by
one. First,
\begin{align*}
    u^T \left( 2(s + \sqrt{s}) e_1e_1^T - \sqrt{s} J \right) u
    &= 2(s + \sqrt{s}) u_1^2 - \sqrt{s} u_1^2 + \sqrt{s} \sum_{j=2}^{s+1} u_j^2 \\
    &= 2 \, s \, u_1^2 + \sqrt{s} \, u^T u.
\end{align*}
Second,
\begin{align*}
    -4w^T F u
    &= -4 \sum_{j = 1}^{2s} w_j [F u]_j
     = -4 \sum_{i = 1}^{s} \left( w_{2i - 1} [Fu]_{2i - 1} + w_{2i} [Fu]_{2i} \right) \\
    &= -2 \sum_{i = 1}^{s} \left( w_{2i - 1} (u_1 + u_{i+1}) + w_{2i} (u_1 - u_{i+1}) \right) \\
    &= -2 \sum_{i = 1}^{s} \left( (w_{2i - 1} + w_{2i}) u_1 + u_{i+1}(w_{2i-1} - w_{2i}) \right) \\
    &= -2 \sum_{i = 1}^{s} \left( u_1^2 + u_{i+1}(w_{2i-1} - w_{2i}) \right) \\
    &= -2 \, s \, u_1^2 - 2 \sum_{i = 1}^{s} u_{i+1}(w_{2i-1} - w_{2i}) \\
    &= -2 \, s \, u_1^2 + 2 \sum_{i = 1}^{s} u_{i+1}(w_{2i} - w_{2i-1})
     = -2 \, s \, u_1^2 + 2 \sum_{i = 1}^{s} u_{i+1}(2 w_{2i} - u_1).
\end{align*}
Finally, 
\begin{align*}
    - 4 w^T S_{22} w
    &= -4 w^T \left( \frac{1}{2\sqrt{s}} \sum_{i=1}^{s}
    \left( f_{2i}f_{2i-1}^T + f_{2i-1}f_{2i}^T \right) \right) w \\
    &= -\frac{4}{\sqrt{s}} \sum_{i=1}^{s} w_{2i - 1} w_{2i} 
     = -\frac{4}{\sqrt{s}} \sum_{i=1}^{s} (u_1 - w_{2i}) w_{2i}.
\end{align*}
Combining the three summands, we have as desired
\begin{align*}
    4 z^T Vz 
    &= \left( 2s \, u_1^2 + \sqrt{s} \, u^T u \right) +
    \left( -2s \, u_1^2 + 2 \sum_{i = 1}^{s} u_{i+1}(2w_{2i} - u_1) \right) +
    \left( -\frac{4}{\sqrt{s}} \sum_{i=1}^{s} (u_1 - w_{2i}) w_{2i} \right) \\
    &= \sqrt{s} \, u^T u + 2 \sum_{i = 1}^{s} u_{i+1}(2 w_{2i} - u_1)
    -\frac{4}{\sqrt{s}} \sum_{i=1}^{s} (u_1 - w_{2i}) w_{2i} \\
    &= \sum_{i = 1}^{s} \left(
    \frac{1}{\sqrt{s}}  \, u_1^2 + \sqrt{s} \, u_{i+1}^2 + 2 \, u_{i+1}(2 w_{2i} - u_1)
    -\frac{4}{\sqrt{s}}  (u_1 - w_{2i}) w_{2i}
    \right) \\
    &= \sum_{i = 1}^{s} \left(
    \frac{1}{\sqrt{s}}  \, u_1^2 - 2 \, u_1 \, u_{i+1}
    -\frac{4}{\sqrt{s}}  \, u_1 \, w_{2i} + \sqrt{s} \, u_{i+1}^2 + 4 \, u_{i+1} \, w_{2i}
    + \frac{4}{\sqrt{s}}  \, w_{2i}^2 
    \right) \\
    &= \sum_{i=1}^{s} \left(
        -  (s)^{-1/4}  \, u_1 + (s)^{1/4} \, u_{i+1} + 2(s)^{-1/4}  \, w_{2i}
    \right)^2 \\
    &\ge 0.
\end{align*}

Given that $\rho$, $V$, and $E$ are defined as above. By
Lemma \ref{lemma:existmu}, $\mu$ can be chosen so that 
(\ref{equ:temporal1_simple}) is indeed satisfied.
\end{proof}


\subsection{Proof of Proposition \ref{prop:dualoptimal}}

\begin{proof}
By construction, $Z$
is positive semidefinite, and one can argue in a straightforward manner
that
\[
Z_{11} = \Diag(1, \tfrac1s, \ldots, \tfrac1s), \quad
Z_{22} = \frac14 \left( I + \mathbbm{1}_m\mathbbm{1}_m^T - \sum_{i=1}^{s}(f_{2i}f_{2i-1}^T + f_{2i-1}f_{2i}^T) \right),
\]
and
\[
Z_{21} = \frac12 \begin{pmatrix}
1 &  \phantom{-}\tfrac{1}{\sqrt{s}} &  \phantom{-}0 & \cdots &  \phantom{-}0 \\
1 & -\tfrac{1}{\sqrt{s}}  &  \phantom{-}0 & \cdots &  \phantom{-}0 \\
1 &  \phantom{-}0 &  \phantom{-}\tfrac{1}{\sqrt{s}} & \cdots &  \phantom{-}0 \\
1 &  \phantom{-}0 & -\tfrac{1}{\sqrt{s}}  & \cdots &  \phantom{-}0 \\
\vdots &  \phantom{-}\vdots &  \phantom{-}\vdots & \ddots &  \phantom{-}\vdots \\
1 &  \phantom{-}0 &  \phantom{-}0 & \cdots &  \phantom{-}\tfrac{1}{\sqrt{s}}  \\
1 &  \phantom{-}0 &  \phantom{-}0 & \cdots & -\tfrac{1}{\sqrt{s}} 
\end{pmatrix}.
\]
Then $Z$ clearly satisfies $g_1g_1^T \bullet Z = 1$, $Z_{11}e_1 \in \K_2$,
$J \bullet Z_{11} \ge 0$, $Z_{22} \ge 0$, and $\text{\rm Rows}(Z_{21}) \in \K_2$.
Furthermore, the constraint $I \bullet Z \le r$ is easily satisfied for
sufficiently large $r$. To check the constraint $\diag(E Z E^T) = 0$, it
suffices to verify $EZ = 0$, which amounts to two equations. First,
\[
0 = E { 2e_1 \choose \mathbbm{1}_m } =  -2 \, d e_1^Te_1 + B^T \mathbbm{1}_m = -2d + 2d = 0,
\]
and second, for each $i=1, \ldots, s$,
\[
0 = E  { \tfrac{2}{\sqrt{s}}e_{i+1} \choose f_{2i-1} - f_{2i} } 
= - \frac{2}{\sqrt{s}} \, de_1^Te_{i+1} + B^T(f_{2i-1} - f_{2i}) 
= 0 + B^Tf_{2i-1} - B^Tf_{2i}
= 0. 
\]
So the proposed $Z$ is feasible. Finally, it is clear that the
corresponding objective value is $F \bullet Z_{21} = \tfrac12 (\sqrt{s}
+ s)$. So $Z$ is indeed optimal.
\end{proof}

\subsection{Proof of Proposition \ref{prop:case1}}

\begin{proof}
The inclusion $\Xi_1 \subseteq \Xi_2$ implies $\K_1 \subseteq \K_2$
and $\CP(\K_1 \times \RR_+^{2s}) \subseteq \CP(\K_2 \times \RR_+^{2s})$.
Hence, $\COP(\K_1 \times \RR_+^{2s}) \supseteq \COP(\K_2 \times \RR_+^{2s})$.
Moreover, it is not difficult to see that the construction of $\IB(\K_1 \times \RR_+^{2s})$
introduced at the end of Section \ref{sec:results} for
the polyhedral cone $\K_1$ satisfies $\IB(\K_1 \times \RR_+^{2s}) \supseteq
\IB(\K_2 \times \RR_+^{2s})$. Thus, we
conclude $v_{\IB,1}^* \le v_{\IB,2}^* = \tfrac12(\sqrt{s} + s)$.

We finally show $v_{\IB,1}^* \ge v_{\IB,2}^*$. Based on
the definition of $\K_1$ using the matrix $P$, similar to
(\ref{equ:temporal1_dual}) the corresponding dual problem is
\begin{equation} \label{equ:polyhedral}
\begin{array}{lll}
v_{\IB,1}^* = & \max & F \bullet Z_{21} \\
& \st & \diag (E Z E^T) = 0, \ I \bullet Z \le r \\
& & PZ_{11}e_1 \ge 0, \ PZ_{11}P^T \ge 0, \ Z_{22} \ge 0, \ P Z_{21}^T \ge 0 \\
& & Z \succeq 0, \ g_1g_1^T \bullet Z = 1.
\end{array}
\end{equation}
To complete the proof, we claim that the specific $Z$ detailed in the
previous subsection is also feasible for (\ref{equ:polyhedral}). It
remains to show that $PZ_{11}e_1 \ge 0$, $PZ_{11}P^T \ge 0$, and $PZ_{21}^T \ge 0$.

Recall that $Z_{11} = \Diag(1, \tfrac1s, \ldots, \tfrac1s)$ and every row of
$P$ has the form $(1, \pm 1, \ldots, \pm 1)$. Clearly, we have  $PZ_{11}e_1 \ge 0$.
Moreover, each entry of $PZ_{11}P^T$ can be expressed as ${1 \choose \alpha}^T Z_{11} {1 \choose \beta}$ for some
$\alpha, \beta \in \RR^s$ each of the form $(\pm 1, \ldots, \pm 1)$. We have
\[
    \textstyle{{1 \choose \alpha}}^T Z_{11} {1 \choose \beta} =
    1 + \frac1s \cdot \alpha^T \beta \ge 1 + \frac1s (-s) \ge 0.
\]
So indeed $PZ_{11}P^T \ge 0$. To check $PZ_{21}^T \ge 0$, recall
also that every column of $Z_{21}^T$ has the form $\tfrac12 (e_1 \pm
\tfrac{1}{\sqrt{s}} e_{i+1})$ for $i=1,\ldots,s$, where $e_{\bullet}$ is a
standard basis vector in $\RR^k = \RR^{s+1}$. Then each entry of $2 P Z_{21}^T$
can be expressed as
\[
    \textstyle{{1 \choose \alpha}}^T e_1 \pm \tfrac{1}{\sqrt{s}} {1 \choose \alpha}^T e_{i+1}
    \ge 1 - \tfrac{1}{\sqrt{s}} > 0.
\]
So $PZ_{21}^T \ge 0$, as desired.
\end{proof}

\end{onehalfspace}

\end{document}